\documentclass[10pt]{article}
\usepackage{amsmath}
\usepackage{amsthm}
\usepackage[a4paper]{geometry}
\usepackage{microtype}
\usepackage[
    pdfusetitle,
    bookmarks=true,
    bookmarksnumbered=false,
    bookmarksopen=false,
    breaklinks=false,
    pdfborder={0 0 1},
    backref=false,
    colorlinks=false,
    hypertexnames=false,
    hidelinks
]{hyperref}
\usepackage[capitalise,noabbrev]{cleveref}
\makeatletter
\usepackage{amssymb}
\usepackage[dvipsnames]{xcolor}
\usepackage{algorithm,algpseudocode}
\newcounter{algsubstate}
\renewcommand{\thealgsubstate}{\alph{algsubstate}}
\newenvironment{algsubstates}
  {\setcounter{algsubstate}{0}%
   \renewcommand{\State}{%
     \stepcounter{algsubstate}%
     \Statex {\footnotesize\thealgsubstate:}\space}}
  {}
\makeatletter
\newcommand{\StateCont}{%
  \Statex
  \ifdim\ALG@tlm>\z@
    \hspace{\dimexpr\ALG@tlm-\algorithmicindent\relax}%
  \fi}
\makeatother
\usepackage{graphicx}

\usepackage{tikz}
\usetikzlibrary{arrows.meta,positioning,calc}
\makeatother
 
\usepackage{subcaption}
\usepackage{makecell}
\usepackage{multirow}
\usepackage{booktabs}
\usepackage{listings}
\usepackage[
    style=numeric,
    backend=biber,
    sortcites,
    hyperref=true,
    maxbibnames=99
]{biblatex}
\usepackage{authblk}

\newtheorem{theorem}{Theorem}[section]
\newtheorem{lemma}[theorem]{Lemma}

\begin{document}
\title{A Memory-Efficient \\ Adjoint State Optimization Method \\ Based on Time-Reversible \\ Dynamical Low-Rank Approximation}
\author[1]{Lukas Einkemmer}
\author[1]{Julian Mangott}
\affil[1]{Department of Mathematics, Universität Innsbruck, Austria}
\date{}
\setcounter{Maxaffil}{0}
\renewcommand\Affilfont{\itshape\small}
\maketitle

\begin{abstract}
    The primary challenge of conducting PDE-constrained optimization for high-dimensional problems, such as kinetic equations, is the often prohibitive memory cost. Computing gradients using the adjoint state method would require the storage of the entire time history of the forward solution. For such problems, where the memory cost for storing a single instance of the forward solution can already be a limiting factor, this is clearly not feasible. In this paper, we propose a memory-efficient adjoint state method that compresses the forward and adjoint solution with a dynamical low-rank approximation (a model order reduction technique) and bypasses the need to store the entire forward solution by employing a time-reversible low-rank integrator. The dynamical low-rank approach introduces a number of challenges: reversibility can fail in the rank-deficient case and the low-rank trajectories can show chaotic behavior. In particular, the latter has a number of important consequences for the optimization problem. We address those challenges and show that our method can drastically reduce the memory requirement for gradient-based optimization of kinetic equations. In particular, we consider two examples from kinetic plasma physics: optimizing beam profiles to suppress a bump-on-tail instability and shaping a particle beam using external electric fields.
\end{abstract}

\section{Introduction}\label{sec:introduction}
PDE-constrained optimization is ubiquitous in applied mathematics, physics and engineering. If $f_\theta$ is the solution of a PDE depending on $d$ parameters $\theta \in \Theta \subset \mathbb{R}^d$, then the goal of PDE-constrained optimization is to minimize the loss functional $J(f_\theta)$ with respect to the parameters $\theta$. Such problems appear, for example, in aerodynamic shape optimization, inverse problems, parameter estimation of stochastic processes, and control and design of fusion devices.

The loss landscapes of PDE-constrained optimization are often very complicated and exhibit many local minima \cite{Einkemmer_2024a,Einkemmer_2024b,Guerra_2025}. Therefore, it is advantageous to use hybrid optimization by combining global and local methods. A global method, such as a genetic algorithm or differential evolution, can identify promising candidate parameters and reduce the risk of convergence to poor local minima. These candidates can then be refined by a local, gradient-based method. In this paper, we focus on the efficient computation of the gradients required for the local optimization stage.

A straightforward way to approximate the gradient is to use centered finite differences,
\begin{equation}\label{eq:finite-differences}
    (\nabla_\theta J(f_\theta))_i \approx \frac{J(f_{\theta + \epsilon e_i}) - J(f_{\theta - \epsilon e_i})}{2\epsilon},
\end{equation}
where $e_i$ is the $i$-th unit vector of the parameter space. In total, the gradient requires $2d$ solutions of the PDE. For high-dimensional parameter spaces, or when solving the PDE is expensive, finite difference gradients quickly become infeasible.

Therefore, the \emph{adjoint state method} is the state-of-the-art method for large-scale PDE-constrained optimization problems. By adding the PDE as a constraint with a Lagrange multiplier, the gradient can be expressed in terms of one forward solution (obtained by solving the PDE) and one adjoint solution (obtained by solving a modified PDE -- the so-called \emph{adjoint equation} -- backward in time). Thus, the cost of one gradient evaluation with the adjoint state method is independent of the number of parameters.

However, this comes at the cost of high memory requirements. The adjoint equation is solved backward in time and often depends on the forward solution at all time points. A conventional implementation therefore stores the entire forward trajectory. This can be prohibitively expensive, especially for PDEs whose solution is posed in a high-dimensional phase space. The Vlasov--Poisson equation, considered for the numerical examples in this paper, is a prototypical hyperbolic example of such a PDE.

We address the high memory requirements of adjoint methods with a novel memory-efficient scheme based on two ingredients. The first is the \emph{dynamical low-rank approximation} (DLRA), which represents the phase-space distribution by time-dependent low-rank factors (see, e.g., \cite{Koch_2008,Lubich_2014}). Instead of solving the PDE for the full high-dimensional solution, lower-dimensional factors and a small coefficient matrix are evolved. Thus, both memory consumption and computational cost are significantly reduced. DLRA has been used in \cite{Scalone_2025}, where the authors use a finite difference approximation of the gradient to optimize an elliptic problem. The first use of DLRA for the adjoint state method is in \cite{Baumann_2025} to solve inverse problems for radiative transfer. This approach is close in spirit to our work, but it still requires the storage of the entire forward solution, which is very costly in terms of memory. 

This motivates the second ingredient of our memory-efficient scheme, a time-reversible DLRA. Such a time-reversible scheme guarantees that we accurately restore the forward solution trajectory in reverse order if we integrate the forward solution backward in time. Consequently, the forward states needed by the adjoint are recomputed during the backward pass instead of being stored. In principle, this can reduce the memory requirement from the full forward trajectory to a single instance of the forward and adjoint solution at any point in time. Our time-reversible DLRA is based on the projector-splitting integrator. We note that all the subflows of this integrator should be computed in a time-reversible manner. We show how this can be done efficiently for the Vlasov--Poisson (and other kinetic) equations by storing lower-dimensional quantities (such as the electric field) in the forward solve.

In addition, we provide some interesting observations of robust DLRA integrators. In particular, we show that the theoretical symmetry of the Strang projector-splitting integrator can fail in the rank-deficient case and that DLRA integrators can have chaotic behavior even if this is not true for the underlying PDE. Both present challenges for performing memory-efficient optimization in a DLRA framework that we will address in this paper.

The remainder of the paper is structured as follows. In \cref{sec:theoretical-background}, we recapitulate the necessary theoretical background concerning the adjoint state method and the DLRA. In \cref{sec:memory-efficient-adjoint-method}, we present the memory-efficient adjoint strategy and discuss limitations caused by rank deficiency and chaotic behavior of the DLRA. Moreover, we address how the DLRA affects the landscape of the loss functional. In \cref{sec:algorithm-numerical-experiments}, we describe our algorithm, which uses a checkpointing strategy to overcome these limitations. The effectiveness of our method is demonstrated by two optimization problems from kinetic plasma physics. Finally, we conclude in \cref{sec:conclusion}.

\section{Theoretical background}\label{sec:theoretical-background}
In this section, we review the theoretical background needed for developing our memory-efficient scheme for the adjoint state method. To this end, we describe first the adjoint state method for PDE-constrained optimization problems. We then briefly summarize the key concepts of the dynamical low-rank approximation and the projector-splitting integrator.

\subsection{Adjoint state method}\label{sec:adjoint-method}
In this section, we briefly review the adjoint state method for a parameter-dependent PDE of the form
\begin{equation}\label{eq:generic_pde}
    \partial_t f_{\theta}(t, z) = F(t, f_{\theta}(t, \cdot), \theta)(z), \qquad f_{\theta}(t=0, z) = f_0(z)
\end{equation}
on a finite time interval $t \in [0, T]$ with the phase space variable $z \in \Omega$. For each $t$, the right-hand side is given by a possibly nonlinear operator $F$ that depends explicitly on time $t$, on the solution $f$ and on the parameters $\theta \in \Theta \subset \mathbb{R}^d$, where $d$ is the number of optimization parameters. Moreover, we assume that the operator $F$ lives on a suitable domain that fulfills the required regularity and boundary conditions. In particular, $F$ contains no time derivatives of $f$, but acts on $f(t, z)$ through its phase-space variables $z$.

Our aim is to find the parameters $\theta$ that minimize a loss functional $J$, i.e.
\begin{equation}\label{eq:optimization}
    \min_{\theta} J(f_{\theta})
\end{equation}
with the constraint that $f_{\theta}$ is a solution of \cref{eq:generic_pde} with parameters $\theta$. We assume that the loss functional $J$ does not explicitly depend on the parameters $\theta$, but consists of a running and a terminal contribution, i.e.
\begin{equation}
    J(f_{\theta}) = \int_0^T \mathcal{J}(f_{\theta}) \, \mathrm{d}t + \Phi(f_{\theta}(T)).\label{eq:abstract-loss}
\end{equation}

As outlined in the introduction, we want to make use of local optimization with a gradient-based method. Due to the implicit dependence of $J$ on the parameters $\theta$ through the PDE constraint, the computation of the gradient is however often not feasible. We have seen in the introduction that the computation with a finite difference scheme scales with the number of parameters $d$ and is therefore very expensive to compute if the number of parameters is large. The same is true for automatic differentiation, which has to be carried out for every single time step. Even if automatic differentiation is supported by the numerical solver for the PDE, it can be prohibitively expensive to compute due to large computational graphs.

Therefore, we solve the constrained optimization problem with the adjoint state method. We introduce Lagrangian multipliers $g$ for the PDE and $u$ for the initial condition and define the Lagrangian
\begin{equation}
    L(f, g, u, \theta) = J(f) - \int_0^T \langle g, \partial_t  f - F(t, f, \theta) \rangle \, \mathrm{d}t - \langle u, f(t=0) - f_0 \rangle,
    \label{eq:abstract-lagrangian}
\end{equation}
where $\langle \cdot, \cdot\rangle$ denotes the inner product on the phase space $\Omega$. The original constrained optimization problem~(\ref{eq:optimization}) is recast to an unconstrained formulation with $f$ separated from the implicit dependence on $\theta$, i.e.
\begin{equation}
    \min_{f, g, u, \theta} L(f, g, u, \theta).
\end{equation}
The first-order optimality condition requires
\begin{equation}\label{eq:optimality-condition}
    \nabla_{\theta} L = 0, \qquad D_f(L)(h) = 0, \qquad D_g(L)(k) = 0, \qquad \text{and} \qquad D_u(L)(v) = 0
\end{equation}
for all variations $h$, $k$ and $v$. Here, $D_f(L)(h)$ denotes the Fréchet derivative of $L$ at $f$ acting on the arbitrary variation $h$.

The second condition corresponds to the so-called adjoint equation which the Lagrange multiplier $g$ has to satisfy. We derive the adjoint equation by computing the Fréchet derivative and moving the time derivative with integration by parts from $f$ to $g$:
\begin{align*}
    D_f(L)(h) &= \int_0^T D_f(\mathcal{J})(h) \, \mathrm{d}t + D_f(\Phi)(h(T)) - \langle g(T), h(T) \rangle + \langle g(0), h(0) \rangle \\ & \qquad + \int_0^T \langle \partial_t g + (D_f(F))^*(g), h \rangle \, \mathrm{d}t - \langle u, h(0) \rangle,
\end{align*}
where $(D_f(F))^*$ denotes the adjoint of the Fréchet derivative. Note that here we assumed that the boundary conditions in phase space are chosen such that the corresponding boundary terms resulting from the integration by parts vanish.

Since $D_f(L)(h) = 0$ for any variation $h$ due to the first-order optimality condition, we choose a variation with vanishing endpoints, $h(0) = h(T) = 0$, and obtain the adjoint equation
\begin{equation}
    \partial_t g + (D_f(F))^*(g) = -\nabla_f \mathcal{J},\label{eq:generic_adjoint}
\end{equation}
where $\nabla_f \mathcal{J}$ is the Riesz representative of the Fréchet derivative defined by $\langle \nabla_f \mathcal{J}, h \rangle = D_f(\mathcal{J})(h)$. By choosing a variation $h$ with nonvanishing boundaries at $T$, we obtain the terminal condition for the adjoint equation,
\begin{equation}
    g(T) = \nabla_{f(T)} \Phi.\label{eq:generic_terminal_condition}
\end{equation}
Thus, the adjoint equation is a terminal-value problem that is solved backward in time.

If we know the Lagrange multiplier $g$ and if $f$ fulfills \cref{eq:generic_pde} for given parameters $\theta$, then the first condition~\eqref{eq:optimality-condition} enables us to compute the gradient by
\begin{equation}
    \nabla_{\theta} J = \nabla_{\theta} L = \int_0^T \langle g(t,\cdot), \nabla_{\theta} F(t, f, \theta)(\cdot) \rangle \, \mathrm{d}t.\label{eq:generic_gradient}
\end{equation}

The adjoint state method proceeds as follows. For given parameters $\theta$, we first compute the solution of \cref{eq:generic_pde} forward in time. In the following, we will call \cref{eq:generic_pde} the \emph{forward problem} or \emph{forward equation} and its solution the \emph{forward solution}. We then use the forward solution at the final time point $T$ to compute the terminal condition of $g$ via \cref{eq:generic_terminal_condition}. Starting from the terminal condition, we then integrate the adjoint equation~\eqref{eq:generic_adjoint} backward in time. Finally, we compute the gradient $\nabla_{\theta} J$ from $f$ and $g$ with \cref{eq:generic_gradient}. This is illustrated in \cref{fig:adjoint-method}, where $\varphi^f_{\Delta t}$ and $\varphi^g_{\Delta t}$ denote the flow of the numerical scheme with time step size $\Delta t$ for the forward and the adjoint problem, respectively.

\begin{figure}[!htb]
    \centering
    \resizebox{0.72\textwidth}{!}{\begin{tikzpicture}[
    scale=0.92,
    every node/.style={inner sep=1pt},
    forwarddot/.style={circle, fill=teal!65!black, inner sep=0pt, minimum size=4.4pt},
    adjointdot/.style={circle, fill=red!78!black, inner sep=0pt, minimum size=4.4pt},
    timeline/.style={black, line width=0.75pt},
    steparrow/.style={-{Latex[length=2.1mm,width=1.7mm]}, black, line width=0.6pt, dashed, dash pattern=on 2pt off 2pt},
    reversearrow/.style={-{Latex[length=2.1mm,width=1.7mm]}, teal!65!black, line width=0.6pt, dashed, dash pattern=on 2pt off 2pt}
]
    \def\top{2.18}
    \def\bottom{0}

    \coordinate (f0) at (0,\top);
    \coordinate (f1) at (3.10,\top);
    \coordinate (f2) at (6.20,\top);
    \coordinate (fa) at (10.05,\top);
    \coordinate (fb) at (13.15,\top);

    \coordinate (g0) at (0,\bottom);
    \coordinate (g1) at (3.10,\bottom);
    \coordinate (g2) at (6.20,\bottom);
    \coordinate (ga) at (10.05,\bottom);
    \coordinate (gb) at (13.15,\bottom);

    \draw[timeline] (f0) -- (f1) -- (f2) -- ++(0.45,0);
    \draw[timeline] ($(fa)+(-0.45,0)$) -- (fa) -- (fb);
    \draw[timeline] (g0) -- (g1) -- (g2) -- ++(0.45,0);
    \draw[timeline] ($(ga)+(-0.45,0)$) -- (ga) -- (gb);

    \node[anchor=east, font=\normalsize] at (-1.0,\top) {forward pass};
    \node[anchor=east, font=\normalsize] at (-1.0,\bottom) {backward pass};

    \node[font=\normalsize] at (8.125,\top) {$\cdots$};
    \node[font=\normalsize] at (8.125,\bottom) {$\cdots$};

    \foreach \p in {f0,f1,f2,fa,fb} {\node[forwarddot] at (\p) {};}
    \foreach \p in {g0,g1,g2,ga,gb} {\node[adjointdot] at (\p) {};}

    \draw[steparrow] (f0) to[out=35,in=145] node[midway, above=4pt, font=\normalsize] {$\varphi^f_{\Delta t}$} (f1);
    \draw[steparrow] (f1) to[out=35,in=145] node[midway, above=4pt, font=\normalsize] {$\varphi^f_{\Delta t}$} (f2);
    \draw[steparrow] (fa) to[out=35,in=145] node[midway, above=4pt, font=\normalsize] {$\varphi^f_{\Delta t}$} (fb);

    \draw[reversearrow] (g1) to[out=-145,in=-35] node[midway, below=4pt, font=\normalsize, black] {$\varphi^g_{-\Delta t}$} (g0);
    \draw[reversearrow] (g2) to[out=-145,in=-35] node[midway, below=4pt, font=\normalsize, black] {$\varphi^g_{-\Delta t}$} (g1);
    \draw[reversearrow] (gb) to[out=-145,in=-35] node[midway, below=4pt, font=\normalsize, black] {$\varphi^g_{-\Delta t}$} (ga);

    \node[teal!65!black, font=\normalsize, below=4pt] at (f0) {$f_{0}$};
    \node[teal!65!black, font=\normalsize, below=4pt] at (f1) {$f_{1}$};
    \node[teal!65!black, font=\normalsize, below=4pt] at (f2) {$f_{2}$};
    \node[teal!65!black, font=\normalsize, below=4pt] at (fa) {$f_{N-1}$};
    \node[teal!65!black, font=\normalsize, below=4pt] at (fb) {$f_{N}$};

    \node[red!78!black, font=\normalsize, above=4pt] at (g0) {$g_{0}$};
    \node[red!78!black, font=\normalsize, above=4pt] at (g1) {$g_{1}$};
    \node[red!78!black, font=\normalsize, above=4pt] at (g2) {$g_{2}$};
    \node[red!78!black, font=\normalsize, above=4pt] at (ga) {$g_{N-1}$};
    \node[red!78!black, font=\normalsize, above=4pt] at (gb) {$g_{N}$};
\end{tikzpicture}}
    \caption{Schematic of the conventional adjoint state method. The forward and the adjoint solution are denoted by $f$ and $g$, respectively. The forward problem is solved with the numerical flow $\varphi^f$ and the adjoint problem with $\varphi^g$.}
    \label{fig:adjoint-method}
\end{figure}

We close this section on the adjoint state method by mentioning the two standard approaches to derive and implement adjoint methods. In this work, we follow the \emph{optimize-then-discretize} approach, where first the continuous adjoint equation is derived and, for the numerical simulation, subsequently both the forward and adjoint equations are discretized. This approach keeps the discretization of the forward and the adjoint equation completely flexible. However, it is not guaranteed that the resulting discrete gradient is exactly the gradient of the discrete loss functional.

Alternatively, the adjoint equation is derived from the discretized forward problem, called \emph{discretize-then-optimize}. This yields a gradient that is consistent with the implemented numerical scheme. The drawback is that the discrete adjoint can be cumbersome to derive and may strongly depend on implementation details.

The computational cost for the gradient computed by the adjoint method is independent of the number of parameters and only requires two PDE solves. However, a central difficulty remains: computing the gradient of the loss functional $\nabla_{\theta} J$ with \cref{eq:generic_gradient} or the adjoint $g$ with the adjoint equation~\eqref{eq:generic_adjoint}, in general, requires us to store the full forward solution. This is the case when $\nabla_f \mathcal{J}$ depends on $f$ (see the beam heating problem for kinetic plasma physics in \cref{sec:beam-heating}) or $F$ contains a parameter-dependent advection term $H_\theta \partial_z f$ (see the beam shaping problem in \cref{sec:beam-shaping}). If the phase space is high-dimensional, storing $f(t, z)$ for a single time point can already be very expensive due to the curse of dimensionality. Storing $f(t,z)$ for all time steps is clearly infeasible for high-dimensional problems.

\subsection{Dynamical low-rank approximation}\label{sec:DLRA}
The main idea of the dynamical low-rank approximation (DLRA) is to approximate a time-dependent high-dimensional function by a linear combination of time-dependent lower-dimensional functions called \emph{low-rank factors}. We will describe how to use this technique to compress the solution of \cref{eq:generic_pde}, $f(t, z)$, which depends on time $t$ and on the phase space variable $z \in \Omega$. We will use the same approach for the backward solution, i.e., for obtaining $g$.

In order to approximate $f(t, z)$ with low-rank factors, we write the phase space as the direct sum of two lower-dimensional subspaces $\Omega_x$ and $\Omega_v$, i.e.~$\Omega = \Omega_x \oplus \Omega_v$. Thus, we can express a point in the full phase space, $z \in \Omega$, by $z = (x, v)$, where $x \in \Omega_x$ and $v \in \Omega_v$. For kinetic equations this assumption is well justified, as $\Omega_x$ can still have an arbitrary geometry and $\Omega = \mathbb{R}^{d_v}$, where $d_v \leq 3$ is the number of dimensions in velocity space, or some suitable truncation. We then approximate $f$ as follows 
\begin{equation}\label{eq:dlra}
    f(t, x, v) \approx \sum_{i,j=1}^{r} X_i(t, x) S_{ij}(t) V_j(t, v),
\end{equation}
where $r$ is called the \emph{rank}, $S(t)\in\mathbb{R}^{r\times r}$ is the time-dependent \emph{coefficient matrix}, and $X_i(t, \textcolor{blue}{\cdot}) \in L^2(\Omega_x)$ and $V_j(t, \textcolor{blue}{\cdot}) \in L^2(\Omega_v)$ are time-dependent, lower-dimensional basis functions that are orthonormal
\begin{equation*}
    \langle X_i, X_j \rangle_x = \delta_{ij}, \qquad \langle V_i, V_j \rangle_v = \delta_{ij},
\end{equation*}
and fulfill the following gauge conditions
\begin{equation*}
    \langle X_i, \partial_t X_j \rangle_x=0, \qquad \langle V_i, \partial_t V_j \rangle_v = 0.
\end{equation*}
Here, $\langle \cdot, \cdot \rangle_x$ and $\langle \cdot, \cdot \rangle_v$ denote the standard $L^2$ inner products on $\Omega_x$ and $\Omega_v$, respectively.

For numerical simulations, we typically discretize our problem by $N_x$ and $N_v$ grid points in the $x$- and $v$-direction, respectively. For a fixed time $t$, this results in discretized basis functions $X \in \mathbb{R}^{N_x \times r}$ and $V \in \mathbb{R}^{N_v \times r}$. In this setting, \cref{eq:dlra} has the form of a singular value decomposition (SVD), with the difference that $S$ is allowed to be a non-diagonal matrix. For a kinetic model with $3$ spatial and $3$ velocity dimensions that is discretized with $N_x = N_v = N$ grid points, the memory consumption is reduced from $\mathcal{O}(N^6)$ to $\mathcal{O}(r N^3)$.

In the following, we derive evolution equations for the low-rank factors $X$, $S$ and $V$. We achieve this by projecting the right-hand side of \eqref{eq:generic_pde} on the tangent space of the manifold of low-rank functions of the form~\eqref{eq:dlra}. The projector $P$ at position $f = \sum_{ij=1}^r X_i S_{ij} V_j$ has the form
\begin{equation}\label{eq:projector}
    P(f) g = P_{\overline{V}} g - P_{\overline{X}} P_{\overline{V}} g + P_{\overline{X}} g,
\end{equation}
where $P_{\overline{X}}$ and $P_{\overline{V}}$ are the projectors onto $\overline{X} = \mathrm{span}\{ X_i \}$ and $\overline{V} = \mathrm{span}\{ V_i \}$, respectively.

The dynamical low-rank approximation does not solve \cref{eq:generic_pde}, but the projected dynamics given by
\begin{equation}\label{eq:projected-PDE}
    \partial_t f(t, z) = P(f) F(t, f(t, \cdot), \theta)(z),
\end{equation}
where $z = (x, v)$. The projector $P$ ensures that the rank will not increase when the solution evolves in time. The origins of this approach stem from quantum mechanics \cite{Dirac_1930,Frenkel_1934}. Later, a numerical integrator for matrix ordinary differential equations (ODEs) was constructed from \cref{eq:projected-PDE} by \cite{Koch_2008}. This integrator for the dynamical low-rank approximation required to compute the inverse of $S$, which is not robust due to the small singular values in $S$. However, small singular values are required for a good low-rank approximation. Only relatively recently, robust integrators have been developed that do not suffer from small singular values. In the following, we will describe one of these robust integrators, the \emph{projector-splitting integrator} proposed for matrix equations in \cite{Lubich_2014} and extended to a continuous PDE framework for kinetic equations in \cite{Einkemmer_2018}. The projector-splitting integrator treats the three different terms in \cref{eq:projected-PDE} separately. That is, we consider the equations
\begin{subequations}\label{eq:psi-equations}
    \begin{align}
        \partial_t f(t, z) &= P_{\overline{V}} F(t, f(t, \cdot), \theta), \tag{\theequation K} \label{eq:psi-equations-K} \\
        \partial_t f(t, z) &= -P_{\overline{X}} P_{\overline{V}} F(t, f(t, \cdot), \theta), \tag{\theequation S} \label{eq:psi-equations-S} \\
        \partial_t f(t, z) &= P_{\overline{X}} F(t, f(t, \cdot), \theta). \tag{\theequation L} \label{eq:psi-equations-L}
    \end{align}
\end{subequations}
The corresponding numerical flows are denoted by $\psi^{K}_t$, $\psi^{S}_t$ and $\psi^{L}_t$ for equations \cref{eq:psi-equations-K,,eq:psi-equations-S,,eq:psi-equations-L}, respectively. Then we can compute the solution with any splitting method such as Lie--Trotter splitting
\begin{equation}\label{eq:lie-trotter}
    \begin{aligned}
        \varphi^1_{\Delta t} &= \psi^{L}_{\Delta t} \circ \psi^{S}_{\Delta t} \circ \psi^{K}_{\Delta t},
    \end{aligned}
\end{equation}
or Strang splitting
\begin{equation}\label{eq:strang}
    \varphi^2_{\Delta t} = \psi^{K}_{\Delta t / 2} \circ \psi^{S}_{\Delta t / 2} \circ \psi^{L}_{\Delta t} \circ \psi^{S}_{\Delta t / 2} \circ \psi^{K}_{\Delta t / 2}.
\end{equation}

The practical algorithm for the Lie--Trotter splitting $\varphi^1_{\Delta t}$ is shown in \cref{alg:projector-splitting-lie} for the general parameter-dependent PDE~\eqref{eq:generic_pde}. The algorithm for Strang splitting, $\varphi^2_{\Delta t}$, can be obtained similarly. Since there is currently no proof that shows second-order convergence for Strang splitting, we consider it to be only formally second-order in time. In numerical experiments, we observe that the Strang projector-splitting integrator yields more accurate results than the first-order Lie--Trotter scheme.

Furthermore, let us remark that the dynamical low-rank approximation exclusively works with the low-rank factors and never forms the full solution, and thus circumvents the curse of dimensionality. To accomplish this, the projections in \cref{eq:practical-psi-equations} have to be computed efficiently using the specific form of $F$ for a given problem. It has been shown that this can be done for a variety of kinetic equations; for more information on the Vlasov--Poisson and Vlasov--Maxwell equations see, e.g., \cite{Einkemmer_2018,Piazzola_2020}, for the radiative transfer equation, see, e.g., \cite{Peng_2020,Einkemmer_2021}, and for the BGK/Fokker-Planck/Boltzmann equations see, e.g., \cite{Einkemmer_2021d,Coughlin_2022,Hu_2022,Zhang_2025}.

What makes the projector-splitting integrator robust is how the basis functions are updated. They are indirectly obtained from the auxiliary quantities $K$ and $L$ via orthogonalization. This is also the reason why the three steps of the algorithm are commonly called K, S and L steps. In the discretized setting, this orthonormalization can be achieved with a QR or polar decomposition. Forming the auxiliary quantities $K$ and $L$ avoids the computation of the inverse of $S$ and makes the integrator well conditioned with respect to small singular values \cite{Kieri_2016}.

\begin{algorithm}
    \caption{Projector-splitting integrator $\varphi^1_{\Delta t}$ (Lie--Trotter splitting)}\label{alg:projector-splitting-lie}
    \begin{algorithmic}[1]
        \Require Low-rank factors $X_i^n(x)$, $S_{ij}^n$, $V_j^n(v)$ at time $t_n$, time step size $\Delta t$.
        \Ensure Low-rank factors $X_i^{n+1}(x)$, $S_{ij}^{n+1}$, $V_j^{n+1}(v)$ at time $t_{n+1} = t_n + \Delta t$.
        \State \textbf{K step:} Form $K_i(t_n, x) = \sum_{j=1}^{r} X_j^n(x) S_{ji}^n$.
        \begin{algsubstates}
            \State Integrate from $t = t_n$ to $t_{n+1}$ the PDE
            \refstepcounter{equation}\label{eq:practical-psi-equations}%
            \begin{equation} \tag{\theequation K} \label{eq:practical-psi-equations-K}
                \partial_t K_i(t, x) = \left\langle V_i^n, F\left( t, \sum_{l=1}^{r} K_l(t, x) V_l^n, \theta \right) \right\rangle_v.
            \end{equation}
            \State Orthonormalize $K_j(t_{n+1}, x)$ to obtain $X^{n+1}_i(x)$ and $S^{*}_{ij}$.
        \end{algsubstates}
        \State \textbf{S step:} Set $S_{ij}(t_n) = S^*_{ij}$.
        \begin{algsubstates}
            \State Integrate from $t = t_n$ to $t_{n+1}$ the matrix ODE
            \begin{equation} \tag{\theequation S} \label{eq:practical-psi-equations-S}
                \frac{\mathrm{d}}{\mathrm{d}t} S_{ij}(t) = -\left\langle X_i^{n+1} V_j^n, F\left( t, \sum_{k,l=1}^{r} X_k^{n+1} S_{kl}(t) V_l^n, \theta \right) \right\rangle_{x,v}.
            \end{equation}
            \State Set $S^{**}_{ij} = S_{ij}(t_{n+1})$.
        \end{algsubstates}
        \State \textbf{L step:} Form $L_i(t_n, v) = \sum_{j=1}^{r} S_{ij}^{**} V_j^n(v)$.
        \begin{algsubstates}
            \State Integrate from $t = t_n$ to $t_{n+1}$ the PDE
            \begin{equation} \tag{\theequation L} \label{eq:practical-psi-equations-L}
                \partial_t L_i(t, v) = \left\langle X_i^{n+1}, F\left( t, \sum_{l=1}^{r} X_l^{n+1} L_l(t, v), \theta \right) \right\rangle_x.
            \end{equation}
            \State Orthonormalize $L_i(t_{n+1}, v)$ to obtain $V^{n+1}_j(v)$ and $S^{n+1}_{ij}$.
        \end{algsubstates}
    \end{algorithmic}
\end{algorithm}

The DLRA can be combined in several different ways with the discretize-then-optimize or with the optimize-then-discretize approach. In \cite{Baumann_2025}, the optimization was carried out first, then the discretization and finally the low-rank approximation was applied. In this paper, we first optimize, then we apply the low-rank approximation and finally, we discretize the low-rank evolution equations.

\section{Memory-efficient adjoint method}\label{sec:memory-efficient-adjoint-method}

The DLRA described in the previous section enables us to dynamically compress the solution of a high-dimensional PDE. We can combine this with the adjoint state method (see \cref{fig:adjoint-method}) by solving both the forward and backward problem using the DLRA. This already reduces the memory requirements significantly. However, it still requires us to store the entire forward solution in order to compute the adjoint and then the gradient using \cref{eq:generic_gradient}. Especially for relatively long-time simulations (which are very relevant in practice), this can still be prohibitive.

We can avoid the storage of the forward problem as follows: after obtaining $f(T, z)$, we run the forward problem backward in time using the adjoint\footnote{Unfortunately, the nomenclature is confusing here. The adjoint of a numerical method, see \cite{Hairer_2006}, is the method that needs to be applied to obtain the same result when integrating backward in time. This should \emph{not} be confused with the adjoint equation, i.e.~the equation for $g$ in the adjoint state method.} method alongside the backward solver for $g$. At each time step, we then directly update the gradient according to \cref{eq:generic_gradient}. Given the flow of a numerical method $\varphi_{\Delta t}$, the \emph{adjoint} of that flow $\varphi^*_{-\Delta t}$ is defined such that $\varphi^*_{-\Delta t} \circ \varphi_{\Delta t}(f) = f$, see, e.g., \cite{Hairer_2006}. Thus, the two flows cancel if one is applied with time step size $\Delta t$ forward and the other with time step size $-\Delta t$, i.e.~backward in time. This guarantees that we obtain the same result for $f$ and the gradient, without explicitly storing $f$.

For splitting schemes, the adjoint is obtained by reversing the order and using the adjoint of the individual subflows (see, e.g., \cite{Hairer_2006}). For the Lie--Trotter splitting in \cref{eq:lie-trotter}, we have
\[ \varphi^{1, \ast}_{\Delta t} = \psi^{K, \ast}_{\Delta t} \circ \psi^{S, \ast}_{\Delta t} \circ \psi^{L, \ast}_{\Delta t} \]
and for the Strang splitting in \cref{eq:strang}, we have
\[ \varphi^{2, \ast}_{\Delta t} = \psi^{K, \ast}_{\Delta t / 2} \circ \psi^{S, \ast}_{\Delta t / 2} \circ \psi^{L, \ast}_{\Delta t} \circ \psi^{S, \ast}_{\Delta t / 2} \circ \psi^{K, \ast}_{\Delta t / 2}. \]
If we use a symmetric method for the individual subflows (such as the implicit midpoint method for the time integration of a PDE that was appropriately discretized in phase space), then $\psi^{K, \ast}_{\Delta t} = \psi^{K}_{\Delta t}$, etc. In this case the Strang splitting scheme is symmetric, i.e.~$\varphi^{2, \ast}_{\Delta t} = \varphi^2_{\Delta t}$. We will show in \cref{sec:low-rank-steps-reversibility} a practical method for the Vlasov--Poisson system that avoids solving the K, S and L evolution equations together with the Poisson equation with an implicit scheme by storing the appropriate electric field, which is a lower-dimensional quantity.

In the context of DLRA, we can thus use the Lie--Trotter projector-splitting $\varphi^1_{\Delta t}$ for the forward pass and its adjoint, $\varphi^{1,*}_{-\Delta t}$, for the backward pass, or vice versa. Alternatively, Strang splitting as a completely symmetric splitting scheme could be used both for the forward and the backward pass. This approach drastically reduces the memory required and is illustrated in \cref{fig:memory-efficient-adjoint-method}.


\begin{figure}[!htb]
    \centering
    \resizebox{0.72\textwidth}{!}{\begin{tikzpicture}[
    scale=0.92,
    every node/.style={inner sep=1pt},
    forwarddot/.style={circle, fill=teal!65!black, inner sep=0pt, minimum size=4.4pt},
    adjointdot/.style={circle, fill=red!78!black, inner sep=0pt, minimum size=4.4pt},
    timeline/.style={black, line width=0.75pt},
    steparrow/.style={-{Latex[length=2.1mm,width=1.7mm]}, black, line width=0.6pt, dashed, dash pattern=on 2pt off 2pt},
    reversearrow/.style={-{Latex[length=2.1mm,width=1.7mm]}, black, line width=0.6pt, dashed, dash pattern=on 2pt off 2pt}
]
    \def\top{2.55}
    \def\bottom{0}

    \coordinate (f0) at (0,\top);
    \coordinate (f1) at (3.10,\top);
    \coordinate (f2) at (6.20,\top);
    \coordinate (fa) at (10.05,\top);
    \coordinate (fb) at (13.15,\top);

    \coordinate (g0) at (0,\bottom);
    \coordinate (g1) at (3.10,\bottom);
    \coordinate (g2) at (6.20,\bottom);
    \coordinate (ga) at (10.05,\bottom);
    \coordinate (gb) at (13.15,\bottom);

    \draw[timeline] (f0) -- (f1) -- (f2) -- ++(0.45,0);
    \draw[timeline] ($(fa)+(-0.45,0)$) -- (fa) -- (fb);
    \draw[timeline] (g0) -- (g1) -- (g2) -- ++(0.45,0);
    \draw[timeline] ($(ga)+(-0.45,0)$) -- (ga) -- (gb);

    \node[anchor=east, font=\normalsize] at (-1.0,\top) {forward pass};
    \node[anchor=east, font=\normalsize] at (-1.0,\bottom) {backward pass};

    \node[font=\normalsize] at (8.125,\top) {$\cdots$};
    \node[font=\normalsize] at (8.125,\bottom) {$\cdots$};

    \foreach \p in {f0,f1,f2,fa,fb} {\node[forwarddot] at (\p) {};}
    \foreach \p in {g0,g1,g2,ga,gb} {\node[adjointdot] at (\p) {};}

    \draw[steparrow] (f0) to[out=35,in=145] node[midway, above=4pt, font=\normalsize] {$\varphi^f_{\Delta t}$} (f1);
    \draw[steparrow] (f1) to[out=35,in=145] node[midway, above=4pt, font=\normalsize] {$\varphi^f_{\Delta t}$} (f2);
    \draw[steparrow] (fa) to[out=35,in=145] node[midway, above=4pt, font=\normalsize] {$\varphi^f_{\Delta t}$} (fb);

    \draw[reversearrow] (g1) to[out=-145,in=-35] node[midway, below=4pt, font=\normalsize] {$\varphi^{f,*}_{-\Delta t}$, $\varphi^{g}_{-\Delta t}$} (g0);
    \draw[reversearrow] (g2) to[out=-145,in=-35] node[midway, below=4pt, font=\normalsize] {$\varphi^{f,*}_{-\Delta t}$, $\varphi^{g}_{-\Delta t}$} (g1);
    \draw[reversearrow] (gb) to[out=-145,in=-35] node[midway, below=4pt, font=\normalsize] {$\varphi^{f,*}_{-\Delta t}$, $\varphi^{g}_{-\Delta t}$} (ga);

    \node[teal!65!black, font=\normalsize, below=4pt, xshift=-5pt] (lf0) at (f0) {$f_{0}$};
    \node[teal!65!black, font=\normalsize, below=4pt, xshift=-5pt] (lf1) at (f1) {$f_{1}$};
    \node[teal!65!black, font=\normalsize, below=4pt, xshift=-5pt] (lf2) at (f2) {$f_{2}$};
    \node[teal!65!black, font=\normalsize, below=4pt, xshift=-5pt] (lfa) at (fa) {$f_{N-1}$};
    \node[teal!65!black, font=\normalsize, below=4pt, xshift=-5pt] (lfb) at (fb) {$f_{N}$};

    \foreach \p in {lf0,lf1,lf2,lfa,lfb} {
        \draw[black, line width=0.55pt]
            ($(\p.south west)+(-1pt,0)$) -- ($(\p.north east)+(1pt,0)$);
    }

    \node[orange!85!black, font=\normalsize, anchor=west] at ($(lf0.east)+(4pt,0)$) {$E_{0}$};
    \node[orange!85!black, font=\normalsize, anchor=west] at ($(lf1.east)+(4pt,0)$) {$E_{1}$};
    \node[orange!85!black, font=\normalsize, anchor=west] at ($(lf2.east)+(4pt,0)$) {$E_{2}$};
    \node[orange!85!black, font=\normalsize, anchor=west] at ($(lfa.east)+(4pt,0)$) {$E_{n-1}$};
    \node[orange!85!black, font=\normalsize, anchor=west] at ($(lfb.east)+(4pt,0)$) {$E_{n}$};

    \node[font=\normalsize, above=4pt] at (g0) {$f_{0},\, {\color{red!78!black}g_{0}},\, {\color{orange!85!black}E_{0}}$};
    \node[font=\normalsize, above=4pt] at (g1) {$f_{1},\, {\color{red!78!black}g_{1}},\, {\color{orange!85!black}E_{1}}$};
    \node[font=\normalsize, above=4pt] at (g2) {$f_{2},\, {\color{red!78!black}g_{2}},\, {\color{orange!85!black}E_{2}}$};
    \node[font=\normalsize, above=4pt] at (ga) {$f_{N-1},\, {\color{red!78!black}g_{N-1}},\, {\color{orange!85!black}E_{n-1}}$};
    \node[font=\normalsize, above=4pt] at (gb) {$f_{N},\, {\color{red!78!black}g_{N}},\, {\color{orange!85!black}E_{n}}$};
\end{tikzpicture}}
    \caption{Schematic of the memory-efficient adjoint state method. The forward and the adjoint solution are denoted by $f$ and $g$, respectively. The forward pass is solved by $\varphi^f$, the backward pass by the adjoint method $\varphi^{f,*}$ for the forward problem and $\varphi^{g}$ for the adjoint problem. The forward trajectory is thus restored during the backward pass. In the context of DLRA, we use the Lie--Trotter $\varphi^1_{\Delta t}$ or Strang projector-splitting $\varphi^2_{\Delta t}$ for $\varphi^f$ and $\varphi^g$. Only quantities that are needed to reconstruct nonlinear terms of the right-hand side of the PDE, which are typically lower-dimensional (such as the electric field for the Vlasov--Poisson equation), have to be stored.}
    \label{fig:memory-efficient-adjoint-method}
\end{figure}

However, there are two features of the DLRA that need to be addressed in order to obtain a viable numerical scheme.
\begin{enumerate}
    \item In \cite{Kieri_2016}, the adjoint of the Lie--Trotter projector-splitting and the symmetric Strang projector-splitting are derived under the assumption that $S$ is invertible. However, it turns out that in the rank-deficient case, the Lie--Trotter projector-splitting composed with its adjoint and the Strang projector-splitting are not time-reversible. Thus, the adjoint Lie--Trotter splitting no longer fulfills the defining adjoint property and Strang splitting loses its symmetric property. This is of particular relevance when we start our simulation, as the initial conditions are commonly of low rank or even rank one and thus smaller than the simulation rank, resulting in an $S$ that is not invertible. We will show in \cref{sec:rank-deficient-case} that the assumption of full rank is indeed necessary and in the rank-deficient case the projector-splitting integrator can fail to be reversible. The reason for this is that in the argument above, we do not take into account how the low-rank factors are reconstructed from K and L. In the rank-deficient case, this can introduce non-uniqueness of the low-rank factors which destroys time-reversibility. Fortunately, from a practical point of view, this only occurs at the very beginning of the simulation where it is relatively easy to mitigate.
    \item It turns out that individual trajectories (i.e.~solutions with different initial conditions) of the low-rank scheme are chaotic (i.e.~they separate quickly during the simulation). While this is not detrimental to the accuracy of the low-rank solutions obtained, in the context of optimization it has a number of important consequences that we address in \cref{sec:chaos}. We also note that this chaotic behavior precludes us from using finite differences to compute the gradient. Thus, the adjoint state method is indeed the only viable option in the DLRA context.
\end{enumerate}

\subsection{Violation of time-reversibility in the rank-deficient case\label{sec:rank-deficient-case}}

Typically, the rank of the initial condition $r_0$ is much smaller than the simulation rank $r$. A common example is the initial condition for the so-called \emph{two-stream instability} of the Vlasov--Poisson system in kinetic plasma physics, i.e.
\begin{equation}\label{eq:two-stream-ic}
    f(0, x, v) = (1 + \alpha \cos(k x)) \frac{1}{2 \sqrt{2 \pi}} \left( \exp\left( -\frac{(v - v_\mathrm{b})^2}{2} \right) + \exp\left( -\frac{(v + v_\mathrm{b})^2}{2} \right) \right),
\end{equation}
which has rank one. Although rank-deficiency is frequently encountered in numerical simulations, the theoretical results for the projector-splitting integrator assume that the coefficient matrix is non-singular and hence do not cover this case \cite{Lubich_2014,Kieri_2016}. Moreover, if we start with a rank-deficient initial condition, the numerical flow of the PDE typically increases the rank. Such an increase of the effective rank is similar to rank-adaptiveness, which is not time-reversible. This is the motivation why we want to study in this section the time-reversibility for the rank-deficient case.

Let us start with a numerical experiment. We compute the time-reversal error for the two-stream instability for a single forward step with $\varphi^1_{\Delta t}$ and a subsequent backward step with $\varphi^{1,*}_{-\Delta t}$. Since one is the adjoint of the other, both steps should cancel. This requires us to solve the Vlasov--Poisson equation (see \cref{eq:vlasov-poisson-eq}), where we use the same electric field for the forward and backward step. The simulations are performed on the domain $(x, v) \in [0, 10 \pi] \times [-6, 6]$ with periodic boundary conditions, with grid size $64 \times 64$ and for $\Delta t = 0.01$. For the time integration of the evolution equations~\eqref{eq:psi-equations}, we used the implicit midpoint scheme and employed LU decomposition for the linear systems with NumPy \cite{Harris_2020}. Thus, the time-reversal error should be close to machine precision. We use rank $r = 3$ and are thus in a rank-deficient situation since the initial condition has rank $1$.

\begin{figure}[!htb]
    \centering
    \begin{minipage}[c]{0.45\textwidth}
        \centering
        \resizebox{\linewidth}{!}{\begin{tikzpicture}[
    x=1cm,
    y=1cm,
    >=Latex,
    block/.style={rounded corners=2pt, minimum width=3.55cm,
        minimum height=0.67cm, inner sep=0pt},
    steptext/.style={font=\bfseries\large},
    errorline/.style={densely dotted, line width=0.8pt}
]
    \draw[->, line width=0.8pt] (0.48,6.72) -- (0.48,-0.62);
    \draw[->, line width=0.8pt] (4.58,-0.62) -- (4.58,6.72);
    \node[font=\large] at (0.50,7.18) {$\varphi^1_{\Delta t}$};
    \node[font=\large] at (4.55,7.18) {$\varphi^{1,*}_{-\Delta t}$};

    \foreach \y/\letter/\fillcolor/\textcolor in {
        0.52/L/red!16/red!75!black,
        1.62/S/blue!17/blue!65!black,
        2.72/K/teal!18/teal!65!black,
        3.82/L/red!16/red!75!black,
        4.92/S/blue!17/blue!65!black,
        6.02/K/teal!18/teal!65!black%
    }{
        \node[block, fill=\fillcolor] (block-\letter-\y) at (2.53,\y) {};
        \node[steptext, text=\textcolor] at (1.15,\y) {\letter};
        \node[steptext, text=\textcolor] at (3.91,\y) {\letter};
    }

    \node[font=\large] at (2.53,-0.38) {$\vdots$};

    \draw[errorline, teal!70!black] (0.08,6.54) -- (5.04,6.54);
    \node[anchor=west, font=\normalsize, text=teal!70!black]
        at (5.12,6.54) {error after last bwd. K};

    \draw[errorline, blue!70!black] (0.08,5.47) -- (5.04,5.47);
    \node[anchor=west, font=\normalsize, text=blue!70!black]
        at (5.12,5.47) {error after last bwd. S};

    \draw[errorline, red!80!black] (0.08,4.37) -- (5.04,4.37);
    \node[font=\small, text=red!80!black, fill=white,
        fill opacity=0.85, text opacity=1, inner sep=0.5pt]
        at (1.15,4.37) {$V^0$};
    \node[font=\small, text=red!80!black, fill=white,
        fill opacity=0.85, text opacity=1, inner sep=0.5pt]
        at (3.91,4.37) {$\widetilde{V}^0$};
    \node[anchor=west, font=\normalsize, text=red!80!black]
        at (5.12,4.37) {error after last bwd. L};

    \draw[errorline, black] (0.08,3.27) -- (5.04,3.27);
    \node[anchor=west, font=\normalsize]
        at (5.12,3.27) {error before last bwd. L};
\end{tikzpicture}}
    \end{minipage}\hfill
    \begin{minipage}[c]{0.52\textwidth}
        \centering
        \includegraphics[width=\linewidth]{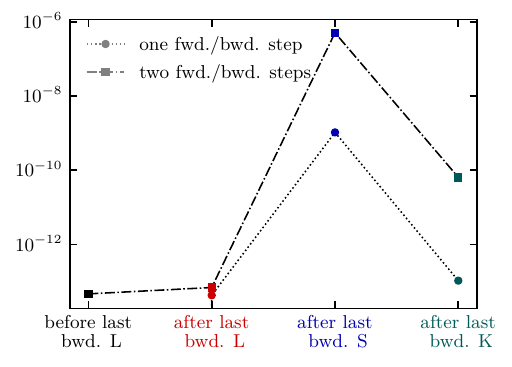}
    \end{minipage}
    \caption{Left: Schematic depiction of the errors measured between the forward integration with $\varphi^1_{\Delta t}$ and the backward integration with its adjoint, $\varphi^{1,*}_{-\Delta t}$. Right: The associated numerical time-reversal errors of the two-stream instability for one and two forward--backward steps.}
    \label{fig:r0_reverse_error_two_steps}
\end{figure}

We show the time-reversal errors between the forward and backward K, S and L substeps in \cref{fig:r0_reverse_error_two_steps}. Note that we can not compare the error between individual low-rank factors, since the basis functions are defined only up to an orthonormal matrix due to the QR decomposition. Instead, we compute the full solution $f(x, v) = \sum_{i,j=1}^r X_i(x) S_{ij} V_j(v)$ for each substep and compare the error between the forward and backward integration. This is illustrated on the left-hand side of \cref{fig:r0_reverse_error_two_steps}. 

We see from the dotted line in the right plot of \cref{fig:r0_reverse_error_two_steps} that the error after the backward S step is significantly larger than machine precision, but the backward K step seems to revert this error, with the final time-reversal error being close to machine precision. This raises the question of why such a high error in the S step occurs and why there is exact error cancellation between the backward S and K step.

Our goal is now to examine this behavior theoretically. We start by comparing the forward and backward L steps. With a time-reversible solver, such as implicit midpoint, both L steps cancel. However, due to the QR decomposition after the backward L step, the span of the $V$ basis functions might be different in the rank-deficient case, i.e.
\begin{equation}\label{eq:span-V0}
    \mathrm{span}\{V^0\} \neq \mathrm{span}\{\widetilde{V}^0\},
\end{equation}
where $\widetilde{V}^0$ are the basis functions after the backward L step, as indicated in \cref{fig:r0_reverse_error_two_steps}. Since the basis functions $X^1$ do not change during the forward and the backward S step, but the basis functions $\widetilde{V}^0$ are now different, the backward S~step with the resulting update $\widetilde{S}^*_{ij}$ is, in general, not the same as in the forward step. This explains the high time-reversal error after the backward S step in \cref{fig:r0_reverse_error_two_steps}. 

Let us now investigate why the cancellation between the backward S and K steps takes place. By multiplying \cref{eq:practical-psi-equations-S} on both sides with $X^1_i(x)$ (the updated basis functions after the forward K step) and defining $K_i^S(t, x) = \sum_{j = 1}^r X_j^1(x) S_{ji}(t)$, we obtain
\begin{equation} \label{eq:practical-psi-equations-KS} 
    \dot{K}^S_i(t, x) = -P_{\overline{X^1}} \left\langle \widetilde{V}_i^0, F\left(t, \sum_{j=1}^r K_j^S(t, x) \widetilde{V}_j^0, \theta \right) \right\rangle_v,
\end{equation}
where $P_{\overline{X^1}}$ is the projector on $\overline{X^1} = \mathrm{span}\{X_i^1\}$. During the backward step, this equation is integrated from $\Delta t$ to $0$ with the terminal condition $K^S_i(\Delta t, x) = \sum_{j = 1}^r X^1_j(x) \widetilde{S}^*_{ji}$. This is the same as \cref{eq:practical-psi-equations-K}, but with a minus sign and with an additional projector in front of the right-hand side. We obtain the following lemma.

\begin{lemma}\label[lemma]{lem:cancellation-condition}
    The K step of the projector-splitting integrator with the $\varphi^{1,*}_{\Delta t}$ splitting cancels in the rank-deficient case with the S step, if the projection in \cref{eq:practical-psi-equations-KS} is exact, i.e.~if for all $t \in [0, \Delta t]$
    \begin{equation*}
        \langle \widetilde{V}_i^0, F(t, \sum_{j=1}^r K_j^S(t, x) \widetilde{V}_j^0, \theta)\rangle_v \in \overline{X^1}, \quad i = 1, \dots, r.
    \end{equation*}
\end{lemma}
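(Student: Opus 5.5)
The plan is to show that, once the projector in \cref{eq:practical-psi-equations-KS} acts trivially, the backward S step is the exact time-reversal of a K step, so that the backward S and K steps together undo the forward K and S steps. We work at the level of the full function $\sum_j K_j(t,x)\widetilde{V}_j^0(v)$ rather than the factors, because factors are only defined up to orthonormal transformations. We take as given the situation described before the lemma: the forward and backward L steps cancel at the level of the full solution, so after the backward L step we have the same function $f^{*} := \sum_{ij} X^1_i \widetilde{S}^*_{ij} \widetilde{V}^0_j$, even though $\mathrm{span}\{\widetilde{V}^0\}$ may differ from $\mathrm{span}\{V^0\}$.

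\textbf{Step 1: the backward S step is a reversed K flow.} Under the hypothesis, $P_{\overline{X^1}}$ acts as the identity on the right-hand side of \cref{eq:practical-psi-equations-KS}. Hence $K^S$ satisfies exactly \cref{eq:practical-psi-equations-K} with $V^n$ replaced by $\widetilde{V}^0$, but with the opposite sign. Integrating from $\Delta t$ back to $0$ with the sign-reversed field is the same as integrating the unprojected K equation forward. More precisely, with the symmetric (implicit midpoint) subflow, $\psi^{S,*}_{-\Delta t}$ coincides on these data with the K flow $\psi^{K}_{\Delta t}$ built on $\widetilde{V}^0$.

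\textbf{Step 2: the backward K step inverts that flow.} The backward K step starts from $K_i = \sum_j X_j \widetilde{S}_{ji}$, which is the output of the backward S step. It uses the same fixed basis $\widetilde{V}^0$ and integrates \cref{eq:practical-psi-equations-K} backward over the same interval. Since implicit midpoint is symmetric, this composition is the identity on $K$. Therefore the backward K step returns $K^S(\Delta t)$, i.e. the function $f^{*}$ that entered the backward S step.

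\textbf{Step 3: identify $f^*$ with the forward data.} It remains to check that the forward K and S steps composed with the forward L step produce the same function as the path just constructed, so that the net backward map returns $f^0$. The observation is that the forward S and K steps are themselves related by the same mechanism, since the forward S-flow is a reversed K-flow in $\mathrm{span}\{X^1\}$. Moreover, both $V^0$ and $\widetilde{V}^0$ span spaces containing the row space of the iterates, because $S$ is rank-deficient and the extra directions carry zero coefficients. I would therefore argue that the full-function trajectory $\sum_j K_j V_j$ does not depend on which completion of the basis is used, provided $F$ evaluated on the trajectory projects consistently. This is where the exactness hypothesis again guarantees that nothing is lost by the $v$-projection.

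The main obstacle is Step 3. The reversal in Steps 1--2 is clean only at the level of the flows, and the precise bookkeeping of the QR re-orthonormalizations (the matrices $S^*$ and $\widetilde{S}^*$) and of the differing $V$ spans must be handled carefully. I would first try to phrase every substep as a map on full functions $f$, showing that the QR step is the identity on $f$. Then, by the hypothesis, the K, S and L substeps become genuine flows on a common space, and cancellation follows from symmetry of the subflows.
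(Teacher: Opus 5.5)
Your Steps 1 and 2 are exactly the paper's argument, and they already prove the lemma. Writing the backward S step in the variable $K^S$ gives \cref{eq:practical-psi-equations-KS}, which is the K equation with reversed sign and an extra projector. If $P_{\overline{X^1}}$ acts trivially along the trajectory, integrating from $\Delta t$ to $0$ is a forward K flow on the basis $\widetilde{V}^0$. The subsequent backward K step on the same basis undoes it, given a symmetric subflow and the fact that QR leaves the full function unchanged. The lemma claims nothing more: the backward K step returns the function that entered the backward S step.

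Step 3 should be dropped. It targets full reversibility, $\varphi^{1,*}_{-\Delta t}\circ\varphi^1_{\Delta t}(f_0)=f_0$, which the lemma does not claim and its hypothesis does not imply.

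\begin{itemize}
\item \emph{What the hypothesis actually yields.} The backward S and K steps together return the output of the backward L step. By L-cancellation, this is the state after the forward S step, $\psi^{S}_{\Delta t}\circ\psi^{K}_{\Delta t}(f_0)$. That equals $f_0$ only if the forward K and S steps also cancel, which is a separate condition. It holds, e.g., if the analogous exactness condition is satisfied for the forward S step with $V^0$. It also holds trivially in the counterexample of \cref{thm:violated-time-reversibility}, where the forward K and S steps do nothing.
\item \emph{Conflating the two projections.} The hypothesis only concerns the $x$-projection onto $\overline{X^1}$ along the backward trajectory built on $\widetilde{V}^0$. It says nothing about the forward steps on $V^0$ or about any $v$-projection. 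Invoking it to argue that ``nothing is lost by the $v$-projection'' therefore conflates the two.
\item \emph{Basis-completion independence fails.} Your claim that the trajectory of $\sum_j K_j V_j$ does not depend on how the rank-deficient basis is completed is false in general. The term $\langle V_i, F(\sum_l K_l V_l)\rangle_v$ involves every basis function, including those carrying zero coefficients. The proviso you attach (that $F$ maps the trajectory into both $v$-spans) is not provided by the hypothesis.
\end{itemize}

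This basis dependence is exactly the mechanism the paper exhibits. In the counterexample, the K right-hand side vanishes for $V^0$ but not for $\widetilde{V}^0$, although both bases contain the $v$-profile of the same rank-one datum. That is why the error after the backward S step is large in the first place.
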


For the dotted line in \cref{fig:r0_reverse_error_two_steps}, the QR decomposition of the forward K step resulted in basis functions $X^1$ that captured the entire dynamics of the backward S step and thereby cancelled the error. However, in general, in the rank-deficient case the span of the basis generated by the QR decomposition is not unique, and we have thus no guarantee that the condition in \cref{lem:cancellation-condition} is satisfied. This is illustrated by the dash-dotted line in \cref{fig:r0_reverse_error_two_steps}, where the time-reversal error is orders of magnitude above machine precision.


We conclude that for the rank-deficient case, we can not expect time-reversibility for the projector-splitting integrator. The following theorem makes this rigorous.

\begin{theorem}\label{thm:violated-time-reversibility}
    The projector-splitting integrator in the rank-deficient case is not time-reversible.
\end{theorem}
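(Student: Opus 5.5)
Since the statement is negative, the plan is to prove it with an explicit counterexample. I will exhibit a linear matrix ODE $\dot f = F(f)$, a rank-deficient initial value, a rank $r$ and a step size $\Delta t$ for which $\varphi^{1,*}_{-\Delta t}\circ\varphi^1_{\Delta t}(f_0)\neq f_0$. Every substep is solved exactly (or with implicit midpoint), and the orthonormalizations are legitimate QR/polar decompositions. This shows that the defining adjoint property fails, which is what time-reversibility means here. The same example composed with its mirror image should also break the symmetry of $\varphi^2_{\Delta t}$.

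\textbf{Setup.} I will work in the discretized setting with $f\in\mathbb{R}^{3\times 3}$ and $r=2$. The initial value is rank one, $f_0 = x_0 v_0^T$. For $F$ I take a linear map of the form $F(f) = Af + fB$ with small constant matrices, chosen so that the K, S and L equations of \cref{alg:projector-splitting-lie} are linear constant-coefficient ODEs. For example, on the K step
\[
\dot K = AK + K\,(V^{0})^T B V^0 ,
\]
which can be written in closed form with matrix exponentials. The key freedom is the completion of the basis: $V^0=[v_0,w]$, where $w\perp v_0$ is arbitrary. After the forward L step, and likewise after the backward L step, the factor $\widetilde S$ is singular. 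Hence QR returns a $\widetilde V^0$ whose second column is only determined up to the choice of completion, which is exactly the phenomenon behind \cref{eq:span-V0}.

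\textbf{Key steps.} First, I run the forward Lie--Trotter step and record $X^1$, $S^{**}$, $V^1$. Second, I run the backward L step; by reversibility of the exact or midpoint L flow, it returns $L(0)=S^{**}V^{0}$ exactly. Because $S^{**}$ is (near-)singular in the relevant direction, I can choose a QR of $L(0)$ with $\widetilde V^0 = [\pm v_0, \widetilde w]$, where $\widetilde w\neq \pm w$. It is cleanest to arrange that $S^{**}$ is exactly rank one, for instance by choosing $B$ so that the K step does not raise the rank. Third, I run the backward S and K steps with $\widetilde V^0$. By \cref{lem:cancellation-condition}, the errors cancel only if
\[
\langle \widetilde V^0_i, F(\cdot)\rangle_v \in \overline{X^1}
\]
along the trajectory. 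I will choose $A$ so that $A$ maps $\overline{X^1}$ partially outside itself, for instance $x_0=e_1$ with $Ae_1$ having an $e_3$-component. I will also choose $B$ so that the coupling $\widetilde w^T B v_0 \neq w^T B v_0$. Then the condition of the lemma fails, and the $\overline{X^1}^\perp$ component generated by the backward K step is not cancelled.

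\textbf{Main obstacle.} The hardest part is to certify rigorously that the final discrepancy $\widetilde f_0 - f_0$ is actually nonzero rather than cancelling accidentally. I will do this by expanding every substep in powers of $\Delta t$. Using closed-form exponentials, the leading-order term of $\widetilde f_0-f_0$ should be of order $\Delta t^2$, proportional to something like
\[
(I-P_{\overline{X^1}})\,A x_0\,\bigl(\widetilde w^T B v_0 - w^T B v_0\bigr)\,\widetilde w^T .
\]
With the concrete choice of $A$, $B$, $w$, $\widetilde w$, this is a visibly nonzero explicit matrix, so for all sufficiently small $\Delta t>0$ the composition is not the identity. If the hand expansion becomes messy, the fallback is to pick specific numbers, such as permutation-type $A$ and $B$ and $\Delta t=1$. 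With those values the exact flows are finite sums, so the single forward--backward step can be evaluated exactly and shown to differ from $f_0$.
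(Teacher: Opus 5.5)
There is a genuine gap, and it lies in your choice of $F$. For the Sylvester-type operator $F(f)=Af+fB$, the rank-deficient Lie--Trotter step composed with its adjoint is \emph{exactly} reversible for every $\Delta t$ and every choice of completions. So no counterexample can come from this class.

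To see this, write $f=XSV^T$, $K=XS$, $L=VS^T$. The subflows are $K(t)=e^{At}K(0)e^{Mt}$ with $M=V^{0T}BV^0$, then $S(t)=e^{-\hat At}S(0)e^{-Mt}$ with $\hat A=X^{1T}AX^1$, and $L(t)=e^{B^Tt}L(0)e^{\hat A^Tt}$. Each is a pair of commuting left and right multiplications.

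Normalize $\|x_0\|=\|v_0\|=1$ and $S^0=e_1e_1^T$. The forward K step gives $K(\Delta t)=a\,e_1^Te^{M\Delta t}$ with $a=e^{A\Delta t}x_0\in\overline{X^1}$. The forward S step strips the right factor again, $S^{**}=e^{-\hat A\Delta t}X^{1T}a\,e_1^T$. The backward L step returns $L(0)=v_0\,(S^{**}e_1)^T$. So whatever completion $\widetilde w$ the QR picks in $\widetilde V^0=[v_0,\widetilde w]$, one has $\widetilde S^{**}=S^{**}$. With $\widetilde M=\widetilde V^{0T}B\widetilde V^0$, the backward S and K steps give
\[
\widetilde S^{*}=X^{1T}a\,e_1^Te^{\widetilde M\Delta t},\qquad K(0)=e^{-A\Delta t}\,a\,e_1^Te^{\widetilde M\Delta t}e^{-\widetilde M\Delta t}=x_0e_1^T,
\]
hence $\widetilde f_0=x_0v_0^T=f_0$.

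The mismatch $\widetilde M\neq M$, which is your coupling $\widetilde w^TBv_0\neq w^TBv_0$, only enters a right factor. The backward K step removes that factor, because it uses the same $\widetilde V^0$ as the backward S step. The very decoupling that keeps $S^{**}$ rank one is what kills the example. Your predicted $O(\Delta t^2)$ term is therefore spurious, and the numerical fallback would simply return $f_0$.

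Implicit midpoint does not rescue it either. It breaks the rank-one structure at $O(\Delta t^3)$, which generically makes $S^{**}$ nonsingular. Then $\mathrm{span}\,\widetilde V^0=\mathrm{span}\,V^0$, and the full-rank reversibility argument applies.

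This also exposes a logical flaw in your key steps. Your choice of $A$ with $A\overline{X^1}\not\subset\overline{X^1}$ generically makes the condition of \cref{lem:cancellation-condition} fail along the backward S trajectory, yet the step is reversible. The lemma is only a sufficient condition for cancellation, so its failure proves nothing; you must compute $\widetilde f_0$ to the end.

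The missing ingredient is an operator that couples the two directions multiplicatively, e.g.\ $F(f)=AfB$, the discrete analogue of the paper's $\cos v\,\partial_x f$. Its K step is $\dot K=AK\widetilde M$, so the completion $\widetilde w$ feeds into the left factor and raises the rank. The paper keeps this computable in three moves:
\begin{itemize}
\item $V^0=(\cos v,\sin v)/\sqrt\pi$ makes the forward K and S steps trivial;
\item the completion is $\widetilde V^0_2=1/\sqrt{2\pi}$;
\item $\Delta t=\pi^3/\sqrt5$ sends the backward S step back to $S^0$.
\end{itemize}
The backward K step is then a wave equation whose explicit solution visibly differs from $f_0$.
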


\begin{proof}
    We show this by giving a counterexample that explicitly violates the cancellation condition of \cref{lem:cancellation-condition}. To this end, we consider the linear transport problem
    \begin{equation}\label{eq:transport-pde}
        \partial_t f(t, x, v) = \cos v \, \partial_x f(t, x, v),
    \end{equation}
    with $(x, v) \in \Omega_x \times \Omega_v = [0, 2\pi] \times [-\pi, \pi]$ and with periodic boundary conditions. In the following, we will only regard the case for rank $r = 2$ and use the rank-$1$ initial condition
    \begin{equation*}
        f_0(x, v) = f(0, x, v) = \frac{1}{\pi} \sin x \cos v.
    \end{equation*}
    
    We again use the projector-splitting integrator with Lie splitting and perform one forward step with $\varphi^1_{\Delta t}$ and one backward step with the adjoint $\varphi^{1,*}_{-\Delta t}$. We then compare the initial condition $f_0$ with $\widetilde{f}_0 = \varphi^{1,*}_{-\Delta t} \circ \varphi^1_{\Delta t}(f_0)$. For time-reversibility, they have to be equal.

    The rank-deficiency leaves us freedom in choosing the second, ``non-active'' basis functions. We set
    \begin{equation}\label{eq:initial-condition-counterexample}
        X^0 = \left( \frac{\sin x}{\sqrt{\pi}}, \, \sqrt{\frac{5}{2}} \frac{(x - \pi)^2}{\pi^{5/2}} \right), \qquad V^0 = \left( \frac{\cos v}{\sqrt{\pi}}, \, \frac{\sin v}{\sqrt{\pi}} \right), \qquad S^0 = \begin{pmatrix} 1 & 0 \\ 0 & 0 \end{pmatrix}.
    \end{equation}

    We start the forward step by computing the right-hand side for the K step according to \cref{eq:psi-equations-K}. This yields
    \begin{equation*}
        \left\langle V^0, F\left(t, \sum_{j = 1}^r K_j V_j^0, \theta \right) \right\rangle_v = \frac{1}{\pi} \int_{-\pi}^\pi \begin{pmatrix} \cos v \\ \sin v \end{pmatrix} \cos v \left( \cos v \, \partial_x K_1 + \sin v \, \partial_x K_2 \right) \mathrm{d}v = \begin{pmatrix} 0 \\ 0 \end{pmatrix}.
    \end{equation*}
    Thus, $V^0$ is orthogonal to the right-hand side of \cref{eq:transport-pde} and consequently, both the K and S step do not update $X^0$ and $S^0$. This also occurs in practical simulations. One way to mitigate this effect there is to generate basis functions directly from the right-hand side of the PDE, so that the basis already contains directions in which the solution starts to evolve. For our counterexample, this situation, however, is convenient (and in fact we did choose the initial low-rank factors to accomplish this), as it keeps the forward step as simple as possible.

    The final L substep of $\varphi^1_{\Delta t}$ cancels with the first L step of the adjoint flow $\varphi^{1,*}_{-\Delta t}$. As we know already from \cref{eq:span-V0}, the resulting $\widetilde{V}^0$ of the backward L step can differ from $V^0$. Here, we assume that the QR decomposition generates
    \begin{equation*}
        \widetilde{V}^0 = \left( \frac{\cos v}{\sqrt{\pi}}, \frac{1}{\sqrt{2 \pi}} \right).
    \end{equation*}
    Note that $\widetilde{V}^0$ is no longer orthogonal to the right-hand side of \cref{eq:transport-pde}. Consequently, the evolution equation for the backward S is now different from the forward one. There is however still the possibility that the backward S and K step cancel, provided the condition of \cref{lem:cancellation-condition} is fulfilled. 
    
    Let us therefore consider the backward S step in more detail. Since $X$ and $S$ did not change during the forward integration and during the backward L step, we have for the terminal condition of \cref{eq:practical-psi-equations-KS}
    \begin{equation*}
        K_i^S(\Delta t, x) = \sum_{j=1}^r X_j^0(x) S_{ji}^0 = \left(\frac{\sin x}{\sqrt{\pi}}, \, 0 \right).
    \end{equation*}
    Therefore, the unprojected right-hand side of \cref{eq:practical-psi-equations-KS} at time $\Delta t$ is
    \begin{equation*}
        \left\langle \widetilde{V}_i^0, F\left( \sum_{j} K_j^S(\Delta t, x) \widetilde{V}_j^0 \right) \right\rangle_v = \left( 0, \, \frac{\cos x}{\sqrt{2 \pi}} \right).
    \end{equation*}
    Clearly, this is not in the span of $X^1 = X^0$ and thus, we have a violation of the cancellation condition of \cref{lem:cancellation-condition}. 

    Let us continue with the backward S step, for which we obtain the following matrix ODE after computing the inner products of the right-hand side of \cref{eq:practical-psi-equations-S}:
    \begin{equation*}
        \frac{\mathrm{d}}{\mathrm{d}t} S(t) = -\frac{2 \sqrt{5}}{\pi^2} \begin{pmatrix} -S_{22}(t) & -S_{21}(t) \\ \phantom{-} S_{12}(t) & \phantom{-} S_{11}(t)
        \end{pmatrix}.
    \end{equation*}
    The terminal condition of the backward S step is $S(\Delta t) = S^0$, as $S$ was not changed so far by the forward integration and the backward L step. Thus, the matrix ODE has the solution
    \begin{equation*}
        S(t) = 
        \begin{pmatrix}
            \cos \left( \frac{2 \sqrt{5}}{\pi^2} (\Delta t - t) \right) & 0 \\
            0 & \sin \left( \frac{2 \sqrt{5}}{\pi^2} (\Delta t - t) \right)
        \end{pmatrix}.
    \end{equation*}
    As we integrate the backward S step from $\Delta t$ to $0$, the final result is $\widetilde{S}^* = S(0)$.
    Since our considerations were so far independent of the time step size, for simplicity we can choose $\Delta t = \pi^3 / \sqrt{5}$. This leaves the $S$ unchanged, i.e.~$\widetilde{S}^* = S^0$ and yields the simple terminal condition $K_i(\Delta t, x) = \sum_{j = 1} X_j^0(x) S_{ji}^0$ for the final backward K step. After carrying out the inner product over $v$ at the right-hand side of \cref{eq:practical-psi-equations-K}, the two components of the evolution equation for K decouple into two wave equations
    \begin{equation*}
        \partial_{tt} K_i(t, x) = \frac{1}{2} \partial_{xx} K_i(t, x),\quad i = 1, 2.
    \end{equation*}
    Integrating from $\Delta t$ to $0$ and using the periodic boundary conditions and the terminal condition from above, we obtain the solution
    \begin{equation*}
        K(0, x) = \frac{1}{\sqrt{\pi}} \left( \sin x \cos \left( \frac{\Delta t}{\sqrt{2}} \right),\, -\cos x \sin \left( \frac{\Delta t}{\sqrt{2}}  \right)  \right).
    \end{equation*} 
    The full solution $\widetilde{f}_0(x, v) = \sum_{i=1}^r K_i(0, x) \widetilde{V}^0_i(v)$ is then
    \begin{equation*}
        \widetilde{f}_0(x, v) = \frac{1}{\pi} \left( \sin x \cos v \cos \left( \frac{\Delta t}{\sqrt{2}} \right) - \frac{1}{\sqrt{2}} \cos x \sin \left(\frac{\Delta t}{\sqrt{2}} \right) \right).
    \end{equation*}
    Since $\sin(\Delta t/\sqrt{2}) \neq 0$ for the chosen $\Delta t$, we have $f_0 \neq \widetilde{f}_0$, as desired.
\end{proof}

In practice, we expect that a large time-reversal error due to rank-deficiency is only present for the first time steps. During the simulation, the rank-deficient, inactive basis functions are bit by bit activated by the right-hand side of the PDE, and $S$ eventually becomes non-singular. Thus, the projector-splitting integrator becomes time-reversible after this warm-up phase. This can be clearly seen in \cref{fig:r0_reverse_error}, where we plot the time-reversal error after full steps for the two-stream instability in the linear phase (with $t \in [0, 10]$). We used the same parameters as for \cref{fig:r0_reverse_error_two_steps}, but for performance reasons, we computed the linear systems of the implicit midpoint rule with the SciPy implementation of GMRES \cite{Virtanen_2020,Saad_1986}. 

Since the time-reversal error due to rank-deficiency is mainly present during the first time steps, it is relatively easy to mitigate. We simply store the initial condition (which in most cases is available analytically) and use this value and recompute the first time steps when computing the gradient.

\begin{figure}[!htb]
    \centering
    \includegraphics{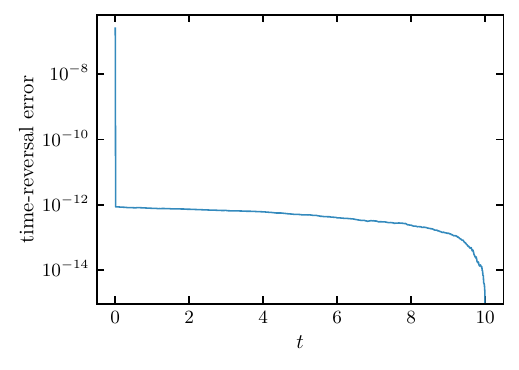}
    \caption{Time-reversal error for the two-stream instability in the time interval $[0, 10]$.}
    \label{fig:r0_reverse_error}
\end{figure}

\subsection{Chaotic behavior\label{sec:chaos}}

In this section we study the sensitive dependence on the initial conditions of the projected PDE, i.e.~\cref{eq:projected-PDE}, that is not intrinsic in \cref{eq:generic_pde}. If two solutions $f$ and $\widetilde{f}$ are initially separated by the small distance $\delta_0 = \widetilde{f}_0 - f_0$, then at a later time, the separation is given by
\begin{equation*}
    \lVert \delta(t) \rVert \sim \exp \left( \lambda_\mathrm{max}(f_0) t \right) \lVert \delta_0 \rVert,
\end{equation*}
where $\lambda_\mathrm{max}(f_0)$ is the \emph{maximal Lyapunov exponent} \cite{Strogatz_2015} and $\lVert \cdot \rVert$ is the $L^2$-norm of the phase space $\Omega$. A positive maximal Lyapunov exponent for non-periodic solution trajectories indicates that the system is chaotic.

We compute the Lyapunov exponent numerically with the method of \cite{Benettin_1976}. To this end, we start with two initial conditions $f_0$ and $\widetilde{f}_0$ which are close, i.e., $\delta_0 = \widetilde{f}_0 - f_0$ and $\lVert \delta_0 \rVert \ll 1$. Next, we integrate both initial conditions from $0$ to $\Delta t$ to obtain the updates $f_{1}$ and $\widehat{f}_{1}$. After this time step, the initial perturbation has changed, $\delta_1 = \widehat{f}_1 - f_1$. Since we are interested in the amplification of the perturbation $\lVert \delta_0 \rVert$ for each time point, we rescale the perturbed solution by $\widetilde{f}_1 = f_1 + \delta_1 \lVert \delta_0 \rVert / \lVert \delta_1 \rVert$. This requires a rank truncation, if the solution is computed with DLRA. If we continue this procedure, we obtain the sequence of perturbations $\{\delta_i\}$, $i = 1, 2, \dots$, from which we can compute an estimate of the maximal Lyapunov exponent,
\begin{equation}\label{eq:lyapunov-estimate}
    k_n(\Delta t, f_0, \delta_0) = \frac{1}{n \Delta t} \sum_{i = 1}^n \log \left( \frac{\lVert \delta_i \rVert}{\lVert \delta_0 \rVert} \right).
\end{equation}
It was shown in \cite{Benettin_1976} that for a randomly chosen $\delta_0$, we can identify $k(\Delta t, f_0, \delta_0) = \lim_{n \to \infty} k_n(\Delta t, f_0, \delta_0)$ with the Lyapunov exponent $\lambda_\mathrm{max}(f_0)$ with an error that tends to zero with $\lVert \delta_0 \rVert$. 

In \cref{fig:r1_lyapunov_exponent}, we show the results for the estimate of Lyapunov exponent computed with the projector-splitting integrator for the two-stream instability. For all ranks, except for the full rank case where $r=64$, the projector-splitting integrator has a large, positive Lyapunov exponent. For the full rank case, the projector-splitting captures the exact, non-chaotic solution, due to the exactness property of \cite[Theorem~4.1]{Lubich_2014}. We, in particular, note that there is still chaotic behavior even for $r=63$.

\begin{figure}[!htb]
    \centering
    \includegraphics{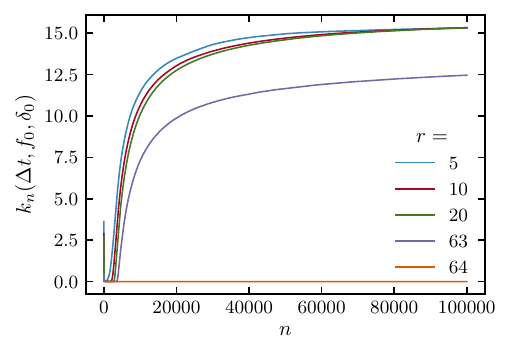}
    \caption{Estimates $k_n(\Delta t, f_0, \delta_0)$ of the maximal Lyapunov exponent for the two-stream instability example computed with the projector-splitting integrator with time step size $\Delta t = 0.01$ and initial perturbation size $\lVert \delta_0 \rVert = 10^{-8}$. The nonlinear phase of the two-stream instability starts at approximately $n = 4000$, where we also observe a higher time-reversal error due to the large (time-)local Lyapunov exponent (cf. \cref{fig:r3_time_reversal_error}).}
    \label{fig:r1_lyapunov_exponent}
\end{figure}


To our knowledge, this behavior has not been observed before. It is interesting to note that the chaotic behavior does not diminish the accuracy of the low-rank scheme. While small perturbations of trajectories separate exponentially, they stay within the low-rank approximation error of the true solution. However, it has important consequences for the optimization problem we consider here, as we will outline below.

Before proceeding, let us also note that this is not purely a shortcoming of the projector-splitting integrator. In fact, we have also conducted numerical experiments with the augmented BUG integrator \cite{Ceruti_2022a} and observed similar results. Thus, it seems to equally affect the commonly used robust DLRA integrators. However, it is not an intrinsic property of the low-rank approximation, as using the best-approximation of rank $r$, i.e.~truncating the full-rank numerical solution at each time step using an SVD, shows no chaotic behavior. Because of this, we conjecture that step-and-truncate low-rank methods (see, e.g., \cite{Kormann_2015,Guo_2024,Einkemmer_2025b}) might be immune from this behavior. However, for the present application, such methods have, in general, higher memory requirements, and it is also less clear how to make them time-reversible (as truncation is a fundamental part of these algorithms).

We also note that performing the simulation with the mass- and momentum-conservative scheme of \cite{Einkemmer_2023} did not improve the sensitivity. Therefore, we conclude that the violation of conservative properties of the underlying PDE by the DLRA is not a viable explanation for the large Lyapunov exponents.

\subsubsection{Growth of the time-reversal error}

The most immediate consequence of the large Lyapunov exponent is that even if our dynamics to reconstruct $f$ is perfectly time-reversible, there is an exponential increase in the time-reversal error due to the perturbation introduced by finite precision arithmetics. This can be clearly observed in \cref{fig:r3_time_reversal_error}, where we store a checkpoint of the forward solution every $\eta$ time steps and then perform the backward integration starting from these checkpoints. It is also interesting to note that the error growth is not uniform in time. The time-reversal error grows much faster during the nonlinear phase of the two-stream instability starting at $t = 40$. This is consistent with \cref{fig:r1_lyapunov_exponent} if we consider $k_n$ as an approximation of the (time-)local Lyapunov exponent.

From a practical point of view, the checkpointing procedure outlined in \cref{alg:memory-efficient-adjoint-method} gives a straightforward solution to this problem at the cost of some additional storage. Thus, we take a sufficient number of checkpoints during the forward pass, such that the time-reversal error stays sufficiently small. As we will see in \cref{sec:algorithm-numerical-experiments}, only a very small number of checkpoints are required in order to obtain accurate gradients for our optimization algorithm.

\subsubsection{Rough optimization landscapes}

In this section, we address the effect of DLRA on the optimization landscape. Since the low-rank approximation removes small singular values, one could expect that it has a smoothing effect on the landscape. However, a consequence of the chaotic nature of the DLRA dynamics is that the loss function is also sensitive with respect to changes in the parameters $\theta$. Indeed, we observe in our numerical experiments, that the low-rank approximation of the landscapes can be considerably rougher than those computed with a full-rank scheme. This can be seen in \cref{fig:r2_landscape_comparison}, where we show the loss landscapes computed by the projector-splitting integrator with different ranks $r$ for the beam shaping example described in \cref{sec:beam-shaping} with two parameters, $K = \{1, 2\}$.

\begin{figure}[!htb]
    \centering
    \includegraphics{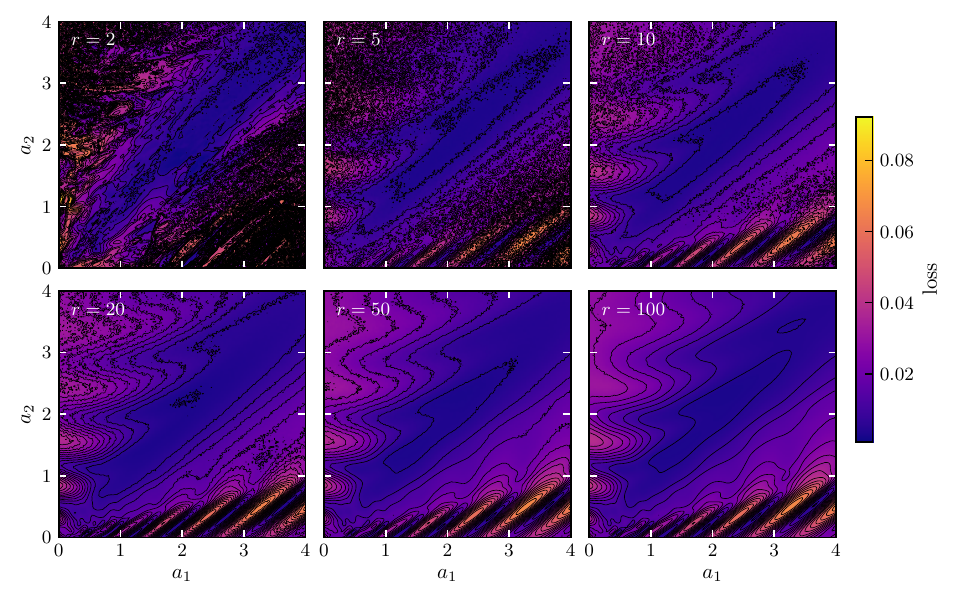}
    \caption{Comparison of the loss landscapes for the beam shaping example from \cref{sec:beam-shaping} with two parameters, $K = \{1, 2\}$. The landscapes were computed with the projector-splitting integrator for different ranks $r$. The full rank for this example is $r = 128$.}
    \label{fig:r2_landscape_comparison}
\end{figure}

Similar to what has been observed in the previous section, small perturbations of the parameters can lead to small differences in the final solutions even if all of those solutions are within the low-rank approximation error. This results in high-frequency perturbations of the loss, making the corresponding landscape rough even though the global structure is captured very well. This roughness makes local optimization difficult, as it introduces many spurious local minima and reduces the reliability of the gradient information. We will discuss this more in \cref{sec:algorithm-numerical-experiments}, where we will use optimization algorithms (such as the Adaptive Moment Estimation (Adam) optimizer \cite{Kingma_2015}) that can deal with such loss functions.

\subsubsection{Failure of finite difference approximations to the gradient\label{sec:fd-approximations}}

A further consequence of the chaotic behavior of the DLRA is that computing the gradient using finite differences gives extremely inaccurate results. We consider the parameter set \textbf{A} of the beam shaping example from \cref{sec:beam-shaping} and show in the left subplot of \cref{fig:r4_comparison_adjoint_finite_difference_gradient} the angle between the full-rank gradient (i.e., the gradient computed by \cref{eq:finite-differences} with the full-rank forward solution $f$ and with $\epsilon = 10^{-7}$) and the gradient computed with the same finite difference scheme, but where $f$ was computed with DLRA for different ranks. We observe that even for high ranks the gradient essentially points in an arbitrary direction. In particular, if the angle exceeds $90^\circ$ (as indicated by the dashed line), the gradient descent step moves along the wrong direction and the optimization no longer converges to the minimum. The right subplot shows the norm of the gradient computed with finite differences, which is up to five orders of magnitude larger than the full-rank gradient.

This is in stark contrast to the DLRA adjoint state method, where the gradient is well aligned with the full-rank solution and whose norm is similar to the one of the full-rank solution, see \cref{fig:r4_comparison_adjoint_finite_difference_gradient} (``adjoint''). Thus, for DLRA using the adjoint state method is, at least for the problems considered here, the only reliable choice to obtain accurate gradients for optimization. Note that to guarantee a fair comparison, we store for this example in our adjoint method the entire forward trajectory to compute the gradient. Thus, both methods, the finite difference and the adjoint state method, do not have a time-reversal error for this comparison.

\begin{figure}[!htb]
    \centering
    \includegraphics{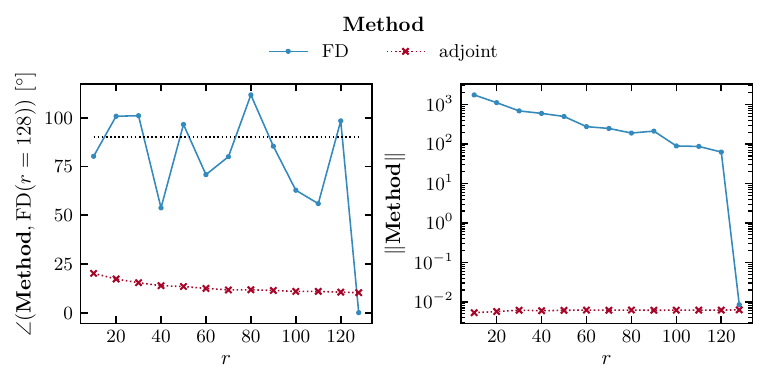}
    \caption{Left: Angle between the reference gradient and the gradient computed with the memory-efficient adjoint method (``adjoint'') and the finite difference method (``FD'', cf. \cref{eq:finite-differences}, where $f$ is computed with DLRA and $\epsilon = 10^{-7}$) depending on the rank $r$ for the initial parameter set \textbf{A} of the beam shaping example from \cref{sec:beam-shaping}. To guarantee a fair comparison, we store for this example in our adjoint method the entire forward trajectory to compute the gradient. The reference gradient was computed with the finite difference method for the full-rank forward solution $f$. Right: Norm of the adjoint and finite difference gradients for varying ranks $r$.}
    \label{fig:r4_comparison_adjoint_finite_difference_gradient}
\end{figure}

\section{Algorithm and numerical experiments}\label{sec:algorithm-numerical-experiments}

As we have seen, computing gradients by finite differences gives inaccurate results for DLRA. However, we also face for the adjoint method two main challenges: namely, the increase in time-reversal error due to the large Lyapunov exponent and the rough solution landscapes. In this section, we address the former by checkpointing and the latter by the appropriate choice of the optimization algorithm. We will show that the resulting algorithm can be used to optimize problems in kinetic plasma physics, where the forward problem is the Vlasov--Poisson equation with either external fields or source terms. In this case, the adjoint equation is similar to the forward problem and can be treated in a straightforward manner. We will consider a beam heating problem, where the modulation of the beam (a source term) is the parameter, and a beam shaping problem, where the external electric field is the parameter.

\subsection{Algorithm}
In order to control the time-reversal error of the backward pass, we modify our memory-efficient adjoint method by storing checkpoints during the forward pass to reset the time-reversal error in the backward pass. The full memory-efficient adjoint method with checkpointing is shown in \cref{alg:memory-efficient-adjoint-method}.

The main parameter to control the time-reversal error is the compression ratio $\eta$, which determines the number of time steps between two forward solutions that are stored as checkpoints in line~\ref{alg:line:store}. During the backward pass, the checkpoints are loaded (line~\ref{alg:line:load}) and the gradient and the loss functional are computed via \cref{eq:generic_gradient,eq:abstract-loss}. Thus, \cref{alg:memory-efficient-adjoint-method} can be incorporated into any gradient-based optimization method. Instead of the Lie--Trotter splitting $\varphi^1$ and its adjoint $\varphi^{1,*}$, the symmetric Strang splitting $\varphi^2$ can be used for both the forward and the backward pass.

\begin{algorithm}
    \caption{Memory-efficient adjoint state method with checkpointing}\label{alg:memory-efficient-adjoint-method}
    \begin{algorithmic}[1]
        \Require Parameters $\theta$, number of time steps $N$, final time $T$, initial condition $f_0$ and 
        \Statex terminal condition $g_N$ in the low-rank format~\eqref{eq:dlra}, compression ratio $\eta$.
        \Ensure Gradient $\nabla_\theta J$, loss $J(f_\theta)$.
        \State Store checkpoint $f_0$.
        \State Set $\Delta t = T / N$.
        \For{$i = 1, \dots, N$} \Comment{\emph{Forward pass}}
            \State Set $t_i = i \Delta t$.
            \State Compute $f_i = \varphi^1_{\Delta t}(f_{i-1})$ (\cref{alg:projector-splitting-lie}) by solving the forward problem~\eqref{eq:generic_pde}.
            \State Store quantities that are needed to reconstruct nonlinear terms.
            \If{$i \bmod \eta = 0$}
                \State Store checkpoint $f_i$. \label{alg:line:store}
            \EndIf
        \EndFor
        \State Set $\nabla_\theta J = 0$.
        \State Set $J = \Phi(f_N)$.
        \For{$i = N - 1, \dots, 0$} \Comment{\emph{Backward pass}}
            \State Set $t_i = i \Delta t$.
            \If{$i \bmod \eta = 0$}
                \State Load checkpoint $f_i$. \label{alg:line:load}
            \Else
                \State Reconstruct nonlinear terms from quantities stored in the forward pass.
                \State Compute $f_i = \varphi^{1,*}_{-\Delta t}(f_{i+1})$ by solving the forward problem~\eqref{eq:generic_pde}.
            \EndIf
            \State Compute $g_i = \varphi^{1}_{-\Delta t}(g_{i+1})$ by solving the adjoint problem~\eqref{eq:generic_adjoint}.
            \State Set $\nabla_\theta J \gets \nabla_\theta J + \langle g_i, \nabla_\theta F(t_i, f_i, \theta) \rangle \Delta t$.
            \State Set $J \gets J + \mathcal{J}(f_i) \Delta t$.
        \EndFor
    \end{algorithmic}
\end{algorithm}

The choice of the compression ratio $\eta$ is important. It should be small enough so that the time-reversal error is below a prescribed threshold, but large enough to store as few checkpoints as possible in order to save memory. In fact, $\eta$ is the checkpoint interval and corresponds to the factor of memory saved if we compare the classical adjoint method with the memory-efficient variant. We show the time-reversal error of the Strang projector-splitting integrator for different values of $\eta$ and two time integrators (classical 4th-order Runge--Kutta and the implicit midpoint rule) for the two-stream instability in \cref{fig:r3_time_reversal_error}. We observe that storing every $2000$th step (which amounts to $5$ checkpoints for the entire simulation) is sufficient to end up with a maximal time-reversal error of about $10^{-7}$. 

It is interesting to note that the chaotic behavior clearly dominates the difference between the two time integrators. Even though the implicit midpoint rule, which is time-reversible, generally has a smaller time-reversal error compared to RK4 (which is not time-reversible), this is a relatively small effect. This is particularly pronounced during the nonlinear phase of the two-stream instability starting at time $t \approx 40$, where the slope of the time-reversal error in \cref{fig:r3_time_reversal_error} becomes significantly steeper.

Using the time-reversal error to estimate the required compression ratio $\eta$ is not feasible in practical applications, since it requires storing the entire forward trajectory and comparing it against the forward problem integrated backward in time. However, since we observe in our numerical experiments that the chaotic behavior of the projector-splitting integrator is the dominant contribution of the time-reversal error for longer simulations, the approximation of the Lyapunov exponent $k_n(\Delta t, f_0, \delta_0)$, \cref{eq:lyapunov-estimate}, can be used as an estimator to determine when to store a checkpoint.

\begin{figure}[!htb]
    \centering
    \includegraphics{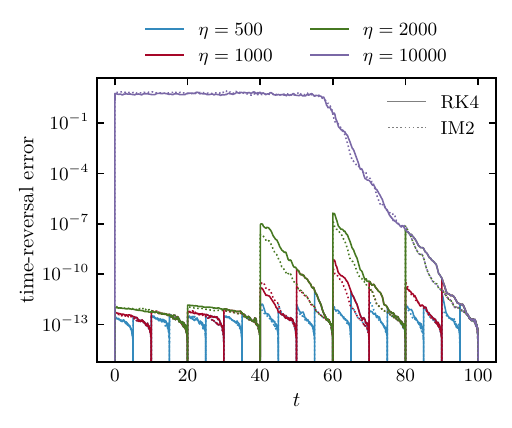}
    \caption{Time-reversal error of the Strang projector-splitting integrator for the two-stream instability in the time interval $[0, 100]$. We use different values of the compression ratio $\eta$ and for the time integration of the K, S and L evolution equations, the classical 4th-order Runge--Kutta method (RK4) and the time-reversible implicit midpoint rule (IM2). The nonlinear phase starts at approximately $t = 40$, where we also observe a higher time-reversal error due to the large (time-)local Lyapunov exponent (cf. \cref{fig:r1_lyapunov_exponent}).}
    \label{fig:r3_time_reversal_error}
\end{figure}

\subsection{Model}
We test the proposed method on optimization problems for the Vlasov--Poisson equation from kinetic plasma physics. Kinetic effects are essential in plasma physics, as many instability mechanisms can not be captured accurately by fluid models. This is also the reason why kinetic models are increasingly used for parameter identification or optimal control, which can be posed as a PDE-constrained optimization problem. In recent years, PDE-constrained optimization has been carried out for a number of kinetic equations, such as the radiative transport equation \cite{Egger_2015,Baumann_2025}, the Boltzmann equation \cite{Albi_2014,Caflisch_2021}, the Fokker–Planck equation \cite{Annunziato_2013,Fleig_2017}, the Vlasov--Poisson equation \cite{Einkemmer_2025a,Glass_2003,Glass_2012,Bartsch_2024,Knopf_2018a,Knopf_2018b,Knopf_2020}, the Vlasov--Maxwell equation \cite{Crouseilles_2025}, and drift-kinetic equations \cite{Guerra_2026}.

The main downside of the kinetic approach is the curse of dimensionality: The governing equations are posed in an up to six-dimensional phase space (as both the spatial and the velocity part can have up to three dimensions) and the memory requirements as well as the computational effort of solving these equations scales exponentially with the number of dimensions.

Therefore, low-rank approximations have received considerable attention for kinetic equations in plasma physics, beginning with \cite{Kormann_2015,Ehrlacher_2017,Einkemmer_2018}. We refer for an overview of low-rank methods for kinetic equations to the recent review article \cite{Einkemmer_2025b}. Due to the large memory footprint and the relevance in the design of future fusion devices, PDE-constrained optimization for the Vlasov--Poisson system is an important benchmark for our memory-efficient adjoint method.

In the following, we will study the optimization problems posed in \cite{Einkemmer_2024a,Einkemmer_2024b}.

\subsubsection{Vlasov--Poisson equation}
We consider a plasma with a constant, positively charged ion background, whose electrons are described by a distribution function $f(t, x, v)$, where $x$ and $v$ denotes position and velocity, respectively.  The time evolution is governed by the one-dimensional non-dimensionalized Vlasov--Poisson equation,
\begin{equation}
    \partial_t f + v\partial_x f - (E(f) + H_\theta) \partial_v f = \mathcal{S}_\theta.
    \label{eq:vlasov-poisson-eq}
\end{equation}
Here, $E(f)$ is the self-consistent electric field generated by the plasma and whose computation we will describe below, $H_\theta$ is a parameter-dependent external electric field, and $\mathcal{S}_\theta$ is a parameter-dependent source term modeling an injected beam.

For simplicity and to keep computational cost low, we will focus here on the $1+1$-dimensional case. However, the algorithm can be applied without change to higher-dimensional problems also. We solve \cref{eq:vlasov-poisson-eq} on the time interval $[0,T]$ and on the phase-space domain
\begin{equation*}
    \Omega_x \times \Omega_v = [0,L] \times \left[-V, V\right],
\end{equation*}
with periodic boundary conditions:
\begin{align*}
    f(t, x + L, v) &= f(t, x, v),\\
    f(t, x, v + V) &= f(t, x, v - V).
\end{align*}

Since the additional source term models a beam that injects additional particles, we define the charge density as
\begin{equation*}
    \rho_f(t, x) = \frac{1}{L} \int_{\Omega_x} \int_{\Omega_v} f(t, x, v) \, \mathrm{d}v \, \mathrm{d}x - \int_{\Omega_v} f(t, x, v) \, \mathrm{d}v.
\end{equation*}
This definition ensures charge neutrality, i.e., $\int_{\Omega_x} \rho_f(t, x) \, \mathrm{d}x = 0$. The electric field is then computed self-consistently from the electric potential $\phi$ which is the solution of the Poisson equation
\begin{align*}
    E(f) &= - \partial_x \phi,\\
    -\partial_{xx} \phi &= \rho_f.
\end{align*}
Another way to write this is with the Green's function $G$ that satisfies
\begin{equation}\label{eq:Greens-function}
    \partial_{xx} G(x) = \delta(x) - \frac{1}{L}.
\end{equation}
Here, we exploited the fact that our system is translation-invariant, and thus, the Green's function depends only on one variable. Moreover, our definition of the Green's function differs from the conventional one by the additional $-1/L$-term. This term ensures consistency with the boundary conditions, since integrating the left-hand side of \cref{eq:Greens-function} yields $\partial_x G(L) - \partial_x G(0)$, which has to vanish due to the periodic boundary conditions. In particular, this requires that the zeroth Fourier mode of the Green's function vanishes.

The electric potential is the convolution of $G$ with the charge density, and hence, we can conveniently express the electric field by
\begin{equation}
    E(f)(t, x) = G' * \rho_f(t, \cdot),
    \label{eq:E-Green}
\end{equation}
where we use the shorthand $G' = \partial_x G$. Note that we never use the analytical expression for $G$ in real space, but solve the electric field and the related convolution terms efficiently in Fourier space.

\subsubsection{Adjoint equation}
We now derive the adjoint equation for the Vlasov--Poisson system. To this end, we start by rewriting \cref{eq:vlasov-poisson-eq} in the form $\partial_t f = F(t, f, \theta)$ of \cref{sec:theoretical-background}. We have
\begin{equation}
    F(t, f, \theta) = -v \partial_x f + (E(f) + H_\theta) \partial_v f + \mathcal{S}_\theta.
    \label{eq:vp-operator}
\end{equation}

For the adjoint equation~\eqref{eq:generic_adjoint}, we first have to compute the Fréchet derivative of the right-hand side operator. Since the source term does not depend on $f$, we obtain with the chain rule
\begin{equation}\label{eq:VP-Frechet}
    D_f(F)(h) = -v \partial_x h + (E(f) + H_\theta) \partial_v h + D_f(E)(h) \partial_v f,
\end{equation}
where $h$ is again an arbitrary variation. The electric field $E$ depends linearly on $f$ via \cref{eq:E-Green}, therefore, the last term of \cref{eq:VP-Frechet} is
\begin{equation*}
    D_f(E)(h) \partial_v f = (\partial_x G * \rho_h) \partial_v f,
\end{equation*}
where $\rho_h$ is the charge density with $f$ replaced by the variation $h$.
Next, we compute the adjoint of the Fréchet derivative, $(D_f(F))^*$, by using its definition
\begin{equation*}
    \left\langle g, D_f(F)(h) \right\rangle_{x, v} = \left\langle (D_f(F))^*(g), h \right\rangle_{x, v}.
\end{equation*}
For the first two terms of \cref{eq:VP-Frechet}, we move the operators from $h$ onto $g$ via integration by parts. The last term requires more effort: Since the Green's function of the Poisson equation is symmetric, we exploit
\begin{equation*}
    \partial_x G(x - y) = -\partial_y G(y - x),
\end{equation*}
and write the last term of \cref{eq:VP-Frechet} as
\begin{equation}
    \left\langle g, (G' * \rho_h) \partial_v f \right\rangle_{x, v} = -\left\langle G' * \langle g, \partial_v f \rangle_v, \rho_h \right\rangle_x.\label{eq:last-term-VP-Frechet}
\end{equation}
With the definition of the charge density $\rho_h$, we obtain for the adjoint of the Fréchet derivative
\begin{equation}
    (D_f(F))^*(g) = v \partial_x g - (E(f) + H_\theta) \partial_v g - \int_{\Omega_x} G' * \left\langle g, \partial_v f \right\rangle_v \left( \frac{1}{L} - \delta(x - x') \right) \, \mathrm{d}x'.
\end{equation}

If we define the adjoint source term $\mathcal{S}_g$ as
\begin{equation*}
    \mathcal{S}_g(t, x) = -\nabla_f \mathcal{J}(t, x) + \int_{\Omega_x} G' * \langle g,\partial_v f \rangle_v (t, x') \left( \frac{1}{L} - \delta(x - x') \right) \, \mathrm{d}x',
\end{equation*}
then the adjoint equation for the Vlasov--Poisson equation takes the compact form
\begin{equation*}
    \partial_t g + v \partial_x g - \left( E(f) + H_\theta \right) \partial_v g = \mathcal{S}_g.
\end{equation*}
This is very similar to the Vlasov--Poisson equation with a source term, cf. \cref{eq:vlasov-poisson-eq}. In fact, we use in our implementation the same code for solving the forward and the adjoint problem.

\subsubsection{Time-reversibility of the low-rank steps\label{sec:low-rank-steps-reversibility}}
In this section, we describe how time-reversibility for the Vlasov--Poisson equation can be efficiently achieved. The standard approach to solve the Vlasov--Poisson equation is to freeze the electric field at the beginning of each time step, so $E(t, x) \approx E_0(x) = G' * \rho_f(0, \cdot)$. After $f_0(x, v)$ is advanced according to the Vlasov--Poisson equation, the electric field is updated at the end of the time step with \cref{eq:E-Green}.

This can be analogously employed for the first-order Lie--Trotter projector-splitting integrator by freezing the electric field at the beginning of a time step and updating it after the last subflow. A second-order method for the Strang projector-splitting integrator can be achieved with the predictor--corrector scheme of \cite{Einkemmer_2014a,Einkemmer_2014b,Einkemmer_2018} as follows. We first perform a half step with Lie--Trotter splitting $\varphi^1_{\Delta t / 2}$ using the frozen electric field $E_0(x)$ and then obtain $f_{1/2}(x, v)$ at $\Delta t / 2$. We use this update of the distribution function to compute $E_{1/2}(x)$ from \cref{eq:E-Green}. Finally, we perform a full step with Strang splitting $\varphi^2_{\Delta t}$ using the frozen $E_{1/2}(x)$.

If we apply this scheme for both the Lie--Trotter splitting and its adjoint, the resulting scheme is however not time-reversible. Using the above outlined first-order scheme, the first L step of the backward integration with the adjoint Lie--Trotter splitting is solved with the frozen electric field at the terminal value $f_N$, whereas for the forward integration with Lie--Trotter splitting, the final L step is integrated with the frozen electric field of $f_{N-1}$. The same argument holds for Strang splitting, since the electric field at the half step $E_{1/2}(x)$ computed with the predictor--corrector method differs for the forward and the backward integration. A proper time-reversible scheme would have to solve the K, S and L evolution equations and the Poisson equation implicitly, which is however very expensive.

Therefore, we follow the standard approach of freezing the electric field, but we reuse the electric field from the forward integration. For the K step for example, we write \cref{eq:practical-psi-equations-K} as
\begin{equation*}
    \partial_t K = \mathrm{RHS}_K(K, E^*),
\end{equation*}
where we use $E^* = E_{i-1}$ for Lie--Trotter splitting and $E^* = E_{i - 1/2}$ for Strang splitting during the $i$-th time step ($i = 1, \dots, N$). For the backward integration, we reuse the electric fields stored from the forward integration and thus ensure that the right-hand side is exactly the same for the backward integration. The electric field is lower-dimensional, as it only depends on $x$, and is therefore cheap to store.

Thus, the K, S and L substeps can be solved individually with a symmetric scheme such as implicit midpoint, and we circumvent the need of solving all evolution equations and the Poisson equation together with an implicit scheme. Alternatively, we could also employ an explicit time integration scheme. Higher-order explicit methods (such as classical 4th-order Runge--Kutta) show only a small time-reversal error while at the same time, they are very cheap to compute.

\subsubsection{Implementation}
We implemented our algorithm in Python and used JAX for the computational bottlenecks. JAX is a library for array-oriented numerical computation on both CPUs and GPUs, and for just-in-time compilation \cite{Deepmind_2020}. Moreover, it allows automatic differentiation, which we used for computing the gradients $\nabla_\theta I_\theta(t)$ and $\nabla_\theta H_\theta(t, x)$ for the problems in \cref{sec:beam-heating,sec:beam-shaping}, respectively.

For the optimization algorithm, we used the JAX libraries Optax and Equinox \cite{Deepmind_2020,Kidger_2021}. These libraries yield efficient implementations of optimizers such as gradient descent with backtracking line search or the Adam. All computations, except the loss landscapes, were carried out on an Apple~M4 CPU. The loss landscapes were computed on a GTX~1080Ti GPU.

Since the exact time-reversibility can not be guaranteed due to the rank-deficiency of our rank-$1$ initial conditions and the chaotic behavior of the projector-splitting integrator, we used our memory-efficient checkpointing scheme with the non-reversible classical Runge--Kutta scheme of order $4$ (RK4) and $30$ substeps for the time integration of the K, S and L evolution equations. This is justified by \cref{fig:r3_time_reversal_error}, where the difference between RK4 and the time-reversible implicit midpoint rule is small. We favor RK4, because it is significantly more computationally efficient. Throughout the numerical experiments, we use the projector-splitting integrator with Strang splitting.

\subsection{Beam heating}\label{sec:beam-heating}
In this numerical example, we study the heating of a plasma through an injected beam. The beam can drive a microinstability causing an additional peak in the velocity distribution. Since the peak is superimposed on the tail of the Maxwellian equilibrium, the instability is called the \emph{bump-on-tail instability}. The bump-on-tail instability is not only relevant for beam heating \cite{Speth_1989}, but also for ion-cyclotron heating \cite{Yamagiwa_1987} and Alfv\'en-wave destabilization that can degrade reactor performance \cite{Van_Zeeland_2021}. 

It was experimentally shown in \cite{Van_Zeeland_2021} that modulation of the beam can significantly affect the ion transport even when the total injected heating power is unchanged. Therefore, \cite{Albi_2025} investigated whether a beam modulated with feedback control could be used to suppress instabilities in the plasma. Such an approach is however challenging to realize because the relevant time scales are of the order of the inverse plasma frequencies or Alfv\'en times and thus very short.

Therefore, several control strategies without a feedback loop have been investigated in the literature. In \cite{Nakamura_1970}, it was experimentally demonstrated that modulating an electron beam close to the natural oscillation frequency of the system reduces the electric field strength during the instability. Similar results were found in \cite{Fukumasa_1982}, through the nonlinear coupling of a relatively small number of modes. A related problem was investigated in \cite{Qin_2014}, where the velocity of the beam was modulated. The authors developed a linear theory showing that a sinusoidal variation of the velocity of the beam reduces the growth rate of the instability. In \cite{Einkemmer_2024a,Einkemmer_2025a}, external electric fields have been optimized to suppress the two-stream and bump-on-tail instabilities. In \cite{Chen_2025}, instabilities are controlled by an external field that effectively eliminates unstable roots in the dispersion relation of the uniformly magnetized Vlasov-Poisson system. Finally, \cite{Einkemmer_2024b} optimized the intensity modulation for a one-dimensional Vlasov--Poisson beam-heating model. In the following, we use the setup of this paper to test our memory-efficient adjoint method.

To this end, we use the time-integrated total electric energy as the loss functional, i.e.
\begin{equation*}
    J(f) = \frac{1}{2} \int_0^T \int_{\Omega_x} E(f)(t, x)^2 \, \mathrm{d}x \, \mathrm{d}t.
\end{equation*}
Comparing this loss functional with the generic form~\eqref{eq:abstract-loss}, we only have a running contribution, $\mathcal{J}(f) = J(f)$, and the terminal contribution $\Phi(f(T))$ for this example is zero. Since the electric field grows exponentially in case of the bump-on-tail instability until nonlinear effects become strong enough to lead to saturation, the time-integrated electric energy is a good measure for the severity of the plasma instability. Numerical experiments in \cite{Guerra_2025} indicate that this functional works very well for optimization of the Vlasov--Poisson equation.

For the source term of the adjoint equation, we need the Riesz representative of the Fréchet derivative $D_f(\mathcal{J})(h)$. From a calculation that is very similar to the one carried out in \cref{eq:last-term-VP-Frechet}, we obtain
\begin{equation*}
    \nabla_f \mathcal{J}(t, x) = -\int_{\Omega_x} G' * E(f)(t, \cdot) \left( \frac{1}{L} - \delta(x - x') \right) \, \mathrm{d}x'.
\end{equation*}
Since the loss functional of this example has no terminal contribution $\Phi$, the terminal condition for the adjoint equation~\eqref{eq:generic_terminal_condition} becomes
\begin{equation*}
    g(T, x, v) = 0.
\end{equation*}

Following \cite{Einkemmer_2024b}, we consider here the case with vanishing external electric field, $H_\theta(x) = 0$. The particle beam is modelled by a source term that is Gaussian in space (centered at the middle of the domain $x_0$ and with spatial extent $\sigma$) and Maxwellian in velocity (with average beam velocity $\bar{v}_{\mathrm{b}}$ and thermal velocity $v_{\mathrm{th,b}}$ of the beam) and is switched off after time $t_\mathrm{b}$, i.e.
\begin{equation}\label{eq:source}
    \mathcal{S}_\theta(t, x, v) = I_\theta(t) M(x, v) \vartheta(t_{\mathrm{b}} - t),
\end{equation}
with 
\begin{equation*}
    M(x, v) = \frac{1}{\sqrt{2 \pi \sigma^{2}}} \exp{\left( -\frac{(x - x_0)^{2}}{2 \sigma^{2}} \right)} \frac{1}{\sqrt{2 \pi v_{\mathrm{th,b}}^{2}}} \exp{\left( -\frac{(v - \bar{v}_{\mathrm{b}})^{2}}{2 v_{\mathrm{th,b}}^{2}} \right)},
\end{equation*} 
and $I_\theta(t)$ is the parameter-dependent modulated beam amplitude and $\vartheta(t_{\mathrm{b}} - t)$ the Heaviside step function. Note that due to the non-vanishing source term $\mathcal{S}_\theta$, we have for the distribution function that $\int_{\Omega_x} \int_{\Omega_v} f(t, x, v) \, \mathrm{d}v \, \mathrm{d}x = 1 + \int_0^t I_\theta(s) \, \mathrm{d}s$.

For this numerical experiment, we assume a sinusoidal beam profile $I_\theta(t) = I_0 + b_1 \sin(\omega t)$, where $I_0 = 0.05$, and $b_1$ and $\omega$ are the model parameters, $\theta = (b_1, \omega)$. The gradient of the loss functional with respect to the parameters can then be computed from \cref{eq:generic_gradient} as
\begin{equation*}
    \nabla_\theta J = \int_0^T \nabla_\theta I_\theta(t) \langle M(x,v), g(t, x, v) \rangle_{x,v} \, \mathrm{d}t.
\end{equation*}
For the space discretization of the K and L steps, we used second-order centered differences on a grid with $1024$ spatial and $512$ velocity points, respectively. The time integration is carried out until $T = 80$, with time step size $\Delta t = 0.01$. We assume that the distribution function is initially constant in $x$ and Maxwellian in $v$,
\begin{equation*}
    f_0 = f(0, x, v) = \frac{1}{\sqrt{2 \pi}} \exp\left( -\frac{v^{2}}{2} \right).
\end{equation*}
All model parameters are listed in \cref{tab:beam-heating-parameters}. In particular, $T$ is much larger than $t_{\mathrm{b}}$ to ensure that the plasma remains stable after switching the beam off.

\begin{table}[!htb]
    \begin{center}
        \begin{tabular}{cccc} \toprule
            \multicolumn{2}{c}{\thead{model parameters}} & \thead{initial parameters $\theta_{\mathrm{init}}$} & \thead{optimized parameters $\theta_{\mathrm{opt}}$} \\ \midrule
            $\begin{aligned}
                L &= 4 \pi\\
                V &= 6 \\
                I_0 &= 0.05 \\
                \sigma &= 0.2
            \end{aligned}$ &
            $\begin{aligned}
                t_{\mathrm{b}} &= 25 \\
                v_{\mathrm{th,b}} &= 0.5 \\
                x_0 &= L/2 \\
                v_{\mathrm{b}} &= 3.5 
            \end{aligned}$ &
            $\begin{aligned}
                b_1 &= -0.02 \\
                \omega &= 1.35 \\
                & \\
                &
            \end{aligned}$ &
            $\begin{aligned}
                b_1 &= -0.033 \\
                \omega &= 1.491 \\
                & \\
                &
            \end{aligned}$
            \\ \bottomrule
        \end{tabular}
    \end{center}
    \caption{The model parameters, the initial parameters $\theta_{\mathrm{init}}$ and the optimized parameters $\theta_{\mathrm{opt}}$ for the beam heating example.}\label{tab:beam-heating-parameters}
\end{table}

The optimization is done with gradient descent using backtracking line search with the Armijo condition. We set the learning rate to $0.25$, the Armijo control parameter to $10^{-4}$ and the backtracking decrease factor to $0.5$. Convergence is achieved when the difference between new and old parameters is below $10^{-3}$. We start the optimization with initial parameters $\theta_{\mathrm{init}}$ tabulated in \cref{tab:beam-heating-parameters}. The gradient is computed with the memory-efficient adjoint method using checkpoints, with rank $r = 20$ and for different values of the compression ratio $\eta$. Since the optimization with gradient descent is not scale-invariant and the scales for frequency and beam amplitude differ vastly, we worked with normalized parameters during the optimization to ensure faster convergence \cite{Nocedal_2006}.

In the left subplot of \cref{fig:e1_0_landscape_trajectory}, we show the trajectory of the optimized parameters in the two-dimensional loss landscape for $\eta = 100$ (which corresponds to $80$ checkpoints). We observe that our method accurately computes the gradient and finds the minimum (highlighted by a star-shaped marker) within $11$ generations. In the right subplot of \cref{fig:e1_0_landscape_trajectory}, we compare the electric energy of a uniform beam with $b_1 = 0$ and the beam with the optimized parameters $\theta_\mathrm{opt}$. The modulated beam effectively suppresses the electric energy, and it remains small even after the beam is switched off at $t_\mathrm{b} = 25$ (highlighted by the dashed vertical line). We also observe that the optimized beam parameters and the behavior of the electric energy are in very good agreement with the findings of \cite{Einkemmer_2024b}.

\begin{figure}[!htb]
    \centering
    \includegraphics{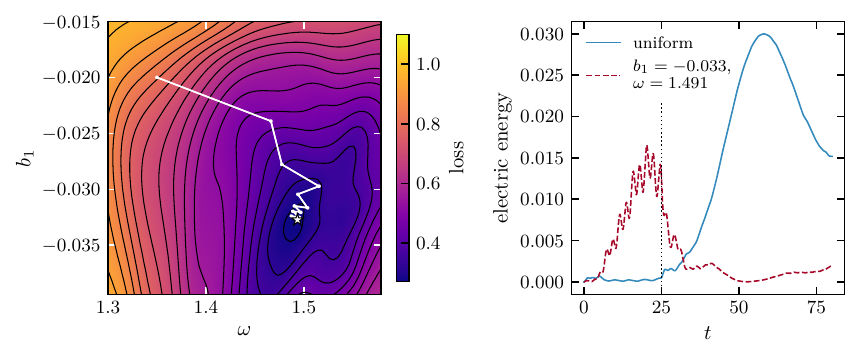}
    \caption{Left: Parameter trajectory for the memory-efficient adjoint method with the compression ratio $\eta = 100$ for the beam heating example. We used gradient descent with backtracking line search and the Armijo condition for the optimization. The converged result is highlighted with a star-shaped marker. Right: Electric energy for a uniform beam with $b_1 = 0$ and for a beam with the optimized parameters $\theta_\mathrm{opt}$. The time $t_\mathrm{b} = 25$ when the beam is switched off is highlighted by a dashed vertical line.}
    \label{fig:e1_0_landscape_trajectory}
\end{figure}

In \cref{fig:e1_1_loss_reversal_error}, we compare the loss and the maximal time-reversal error over generations for different values of the compression ratio $\eta$. Remarkably, our algorithm is very robust with respect to the time-reversal error. Even saving only every $4000$th step (i.e.~only a single checkpoint) in the forward pass still achieves convergence. 

As expected, the time-reversal error increases with the compression ratio $\eta$, but the number of generations needed for convergence is not monotonously increasing with $\eta$. If we consider for example $\eta=2000$, convergence is attained within $7$ generations. For large time-reversal errors, our method seems to resemble a stochastic gradient descent that sometimes can achieve faster convergence than the classical gradient descent. This is similar to the ``partially-converged approach'' described in \cite{Giles_2000}, where partially converged forward and adjoint solutions are used to compute the gradient, thus keeping the cost per design cycle relatively low.

The algorithm breaks down for $\eta = 8000$, where only the initial condition is stored as a checkpoint. For this maximally possible value of $\eta$, the algorithm gets instantly trapped in a wrong minimum.

\begin{figure}[!htb]
    \centering
    \includegraphics{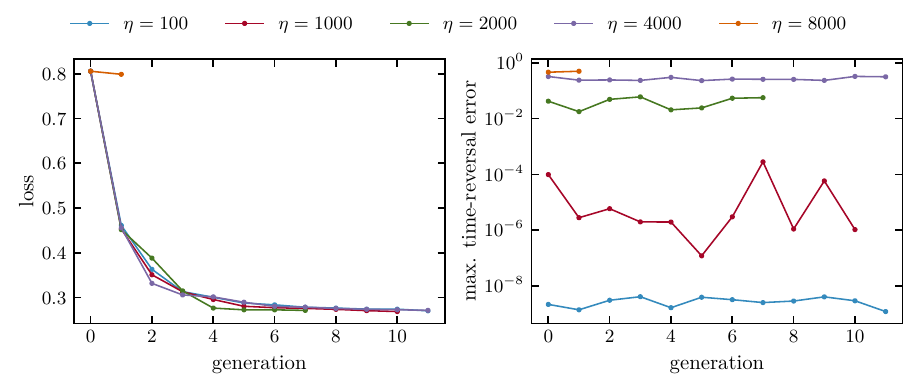}
    \caption{Loss (left) and maximal time-reversal error (right) over generations for the memory-efficient adjoint method with rank $r = 20$ for different values of the compression ratio $\eta$ for the beam heating example. We used gradient descent with backtracking line search and the Armijo condition for the optimization.}
    \label{fig:e1_1_loss_reversal_error}
\end{figure}

\cref{tab:memory-requirements} shows that our approach successfully reduces the memory requirements by approximately a factor of $500$ for all values of the compression ratio $\eta$. The reason why the memory requirements for increasing $\eta$ are not reduced further lies in the self-consistent electric field that is stored at every time point of the forward pass. Already for $\eta = 100$, the electric field contributes to almost one third of the memory requirements. However, we expect even more drastic memory reductions for higher-dimensional phase spaces, where the lower-dimensional electric field is a significantly smaller fraction of the overall memory cost.

\begin{table}[!htb]
    \begin{subtable}[t]{0.48\textwidth}
        \centering
        \caption*{\textbf{Beam heating}}
        \begin{tabular}{rcc} \toprule
            \thead{$\eta$} & \thead{memory req. \\ (in MB)} & \thead{compression} \\ \midrule
            $100$       & $85.95$     & $2.56 \cdot 10^{-3}$    \\
            $1000$      & $68.03$     & $2.03 \cdot 10^{-3}$    \\
            $2000$      & $67.03$     & $2.00 \cdot 10^{-3}$    \\
            $4000$      & $66.53$     & $1.98 \cdot 10^{-3}$    \\
            $8000$      & $66.28$     & $1.97 \cdot 10^{-3}$    \\ \bottomrule
        \end{tabular}
    \end{subtable}
    \hfill
    \begin{subtable}[t]{0.48\textwidth}
        \centering
        \caption*{\textbf{Beam shaping}}
        \begin{tabular}{rcc} \toprule
            \thead{$\eta$} & \thead{memory req. \\ (in MB)} & \thead{compression} \\ \midrule
            $1$     & $3.70$     & $0.34$    \\
            $2$     & $1.94$     & $0.18$    \\
            $5$     & $0.88$     & $0.08$    \\
            $10$    & $0.52$     & $0.05$    \\ \bottomrule
        \end{tabular}
    \end{subtable}
    \caption{Memory requirements for the beam heating (left) and the beam shaping (right) examples using the memory-efficient adjoint method with rank $r = 20$. We assume for the forward pass in total $T / (\eta \Delta t)$ checkpoints and that the electric field at every time point has to be stored. The compression is the memory ratio between our memory-efficient method and the classical adjoint method, which requires the full forward trajectory to be stored.}\label{tab:memory-requirements}
\end{table}

\subsection{Beam shaping}\label{sec:beam-shaping}
For this numerical example, we consider the focusing problem studied in \cite{Einkemmer_2024a}, where the goal is to find parameters of an external electric field that maintains a specific localized shape of the distribution function (this is related to, e.g., beam shaping problems). A related problem from the semiconductor industry is the identification of doping profiles from a voltage-to-current map in the Vlasov--Poisson system \cite{Cheng_2011}.

To this end, we consider the loss functional
\begin{equation}\label{eq:loss-functional-2}
    J(f) = \frac{1}{2} \lVert f(T) - f_0 \rVert^2,
\end{equation}
where we take the $L^2$ norm in both the spatial and velocity spaces to measure the difference of the terminal value of $f$ against the initial condition $f_0$,
\begin{equation*}
    f_0 = f(0, x, v) = \frac{1}{\sqrt{2 \pi}} \exp\left( -a (x - b)^{2} \right) \sin^{2}\left( \frac{x}{2} \right) \exp\left( -\frac{v^{2}}{2} \right),
\end{equation*}
where $a = 0.2$ and $b = 2 \pi$. This initial condition is plotted in \cref{fig:e2_2_initial_shape} and corresponds to two spatially concentrated beams with the velocity distributed according to the Maxwellian. 
\begin{figure}[!htb]
    \centering
    \includegraphics{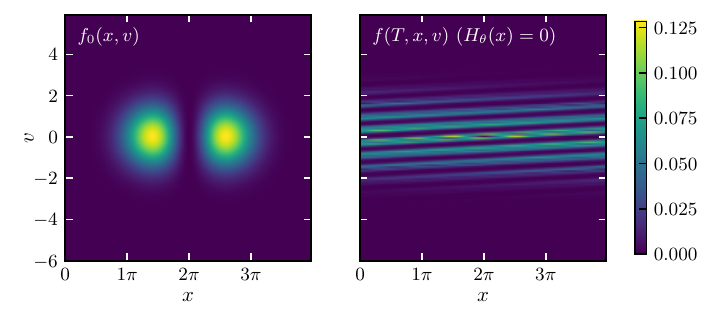}
    \caption{Left: Initial condition $f_0(x, v)$ of the beam shaping example. Right: Distribution function $f(T, x, v)$ for $T = 20$, without external electric field $H_{\theta}(x)$.}
    \label{fig:e2_2_initial_shape}
\end{figure}

Without an external electric field, the distribution function exhibits after $T = 20$ a very filamented structure, as can be seen from the right subplot of \cref{fig:e2_2_initial_shape}. Particles with a higher velocity are looped back to the domain due to the periodic boundary conditions, which causes the filamentation. We want to find an external electric field that keeps the distribution function as close as possible to the initial value. In this example, we maintain the shape solely with an external electric field and set the source term to zero, $\mathcal{S}_{\theta} = 0$. For the external electric field, we assume a decomposition in sinusoidal modes as also used in \cite{Einkemmer_2024a}, i.e.
\begin{equation*}
    H_{\theta}(x) = -\sum_{k = 1}^{10} \theta_k \sin \left(\frac{k x}{2} \right).
\end{equation*}
Thus, we optimize now in total ten parameters $\theta_1,\dots,\theta_{10}$. Such a relatively high-dimensional parameter space is the typical use case for adjoint methods, where computing the gradient with finite differences becomes very expensive. For the beam heating example with only two parameters, computing the gradient with finite differences using DLRA would have been also feasible from the perspective of computational costs. However, we have seen already in \cref{sec:fd-approximations} that the adjoint method does not only have a better scaling with respect to the number of parameters, but its gradient also shows better convergence than the gradient computed with the DLRA finite difference method.

With the loss function~\eqref{eq:loss-functional-2}, we have the complementary setting of the previous example: Now, we do not have a running contribution to the loss functional, but only a terminal contribution, so $\Phi(f(T)) = J(f)$ and $\mathcal{J}(f) = 0$. Therefore, the contribution of the loss functional to the source term is zero, but we have a non-zero terminal condition for the adjoint by \cref{eq:generic_terminal_condition}. We compute the terminal condition from the Fréchet derivative of the terminal contribution, $D_f(\Phi)(h(T))$, and obtain
\begin{equation*}
    g(T, x, v) = \nabla_{f(T)} \Phi = f(T, x, v) - f_0(x, v).
\end{equation*}
Since $f_0$ has rank $1$ and $f(T)$ has in general rank $r$, the difference $f(T) - f_0$ has rank $r + 1$. Thus, in order to obtain the low-rank representation of the terminal condition, we need to slightly truncate the rank. Finally, we compute the gradient from \cref{eq:generic_gradient} as
\begin{equation}\label{eq:gradient-ex-2}
    \nabla_\theta J = \left\langle \nabla_\theta H_\theta(\cdot), \int_0^T \langle g(t, \cdot, v), \partial_v f(t, \cdot, v) \rangle_v \mathrm{d}t \right\rangle_x.
\end{equation}

For the space discretization of the K and L evolution equations, we employ Strang splitting to separate the advection equations in space and velocity. Analogous to \cite{Einkemmer_2024a}, the resulting equations are solved with a semi-Lagrangian discontinuous Galerkin scheme using linear interpolation. The computations are performed on a grid with size $128 \times 128$ and with time step size $\Delta t = 0.25$. We choose the domain size $L = 4 \pi$, the maximal velocity $V = 6$ and the final time $T = 20$. For the following numerical experiments, we will use three different sets of initial parameters called \textbf{A}, \textbf{B} and \textbf{C} (see \cref{tab:initial_parameters}) that were obtained in \cite{Einkemmer_2024a} with a genetic optimization method.

\begin{table}[!htb]
    \begin{center}
        \begin{tabular}{lrrr} \toprule
            & \thead{\textbf{A}} & \thead{\textbf{B}} & \thead{\textbf{C}} \\ \midrule
            $\theta_1$      & $-0.6953110$     & $0.9450489$      & $-0.6953110$ \\
            $\theta_2$      & $-1.7011901$     & $1.1410373$      & $-1.7011901$ \\
            $\theta_3$      & $-3.7023607$     & $-1.5500754$     & $-3.1878159$ \\
            $\theta_4$      & $-1.0494850$     & $0.8429296$      & $0.9743365$  \\
            $\theta_5$      & $-0.4569529$     & $-0.0802972$     & $1.8268111$  \\
            $\theta_6$      & $1.8768650$      & $2.4046179$      & $1.6804664$  \\
            $\theta_7$      & $1.9196010$      & $0.9644806$      & $2.3189560$  \\
            $\theta_8$      & $1.6915317$      & $2.2524267$      & $1.6915317$  \\
            $\theta_9$      & $0.4209613$      & $-0.1217143$     & $1.3303226$  \\
            $\theta_{10}$   & $-0.4064942$     & $-0.6091703$     & $-0.8904933$ \\ \bottomrule
        \end{tabular}
    \end{center}
    \caption{The three initial parameter sets \textbf{A}, \textbf{B} and \textbf{C} of the beam shaping example, taken from \cite{Einkemmer_2024a}.}\label{tab:initial_parameters}
\end{table}

We first revisit the convergence study for the gradient from \cref{sec:fd-approximations} for the initial parameters \textbf{A}, but now we no longer store the entire forward trajectory for the adjoint method, but we use our method with the checkpoint strategy. In the left subplot, we depict the angle between the reference gradient and the gradient obtained by the adjoint method for different values of the compression ratio $\eta$. As in \cref{sec:fd-approximations}, the reference gradient was computed with full-rank finite differences. We observe that the angle for the gradient computed by the adjoint method is for all values of $\eta$ well below $90^\circ$ (note that $\eta = 40$ amounts to storing only two checkpoints for the entire solution), and the angle seems to slightly correlate with the time-reversal error shown in the right subplot of \cref{fig:e2_0_comparison_adjoint_finite_difference_gradient}. Remarkably, our adjoint method is very robust to the time-reversal error and yields gradients that are well aligned, even though the time-reversal error is large.

\begin{figure}[!htb]
    \centering
    \includegraphics{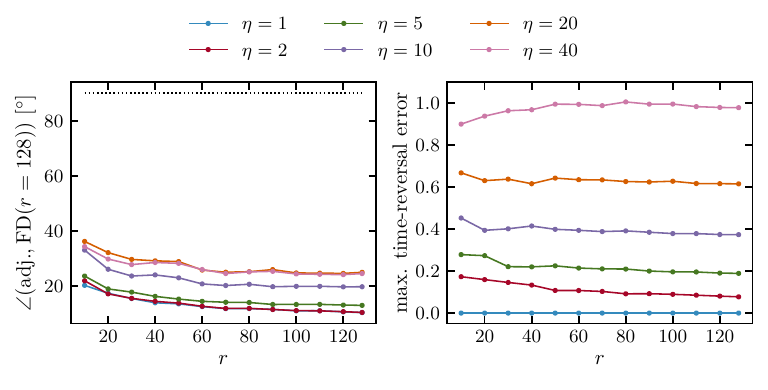}
    \caption{Left: Angle between the reference gradient and the gradient computed with the memory-efficient adjoint method with different values of $\eta$ for the initial parameter set \textbf{A} of the beam shaping example. Right: Time-reversal error of the memory-efficient adjoint method. The reference gradient was computed with the finite difference method (\cref{eq:finite-differences}) for the full-rank forward solution $f$ with $\epsilon = 10^{-7}$.}
    \label{fig:e2_0_comparison_adjoint_finite_difference_gradient}
\end{figure}

Next, we want to optimize the three different sets of initial parameters \textbf{A}, \textbf{B} and \textbf{C} with a gradient-based method. The initial parameters were obtained, as mentioned above, with a genetic algorithm, so refining the results with a gradient-based method would correspond to the second stage of the hybrid optimization method outlined in the introduction. Due to the high-dimensional and rough loss landscape, we expect to encounter many local minima. For this numerical example, we use Adaptive Moment Estimation (Adam) for optimization, since classical gradient descent is prone to getting stuck in minima. Adam is very popular in the machine learning community and modifies classical gradient descent by adaptive estimates of lower-order moments of the gradient \cite{Kingma_2015}. We compute in total $200$ parameter generations with Adam using learning rate $\alpha = 0.01$ and the gradient computed from the memory-efficient adjoint method as input.

In \cref{fig:e2_1_loss_reversal_error}, we show the loss and the maximal time-reversal error for the optimization using the gradient of our memory-efficient adjoint method with $r = 20$. We observe that for all parameter sets and all values of $\eta$, the optimization converges within $150$ generations. More specifically, the optimization improves the loss found by the global optimizer by a factor of $1.5$, $3.5$ and $4.5$ for the three parameter sets \textbf{A}, \textbf{B} and \textbf{C}, respectively. Notably, the time-reversal error for $\eta > 1$ is very high, but nevertheless, the algorithm improves the loss and, in the case for parameter set \textbf{B} and $\eta \ge 5$, even faster than for $\eta = 1$ (where the entire forward solution was stored and reused for the backward pass). Next to the subplots showing the loss, we plot the full-rank loss (i.e., the loss computed with a full-rank simulation, but using the parameters obtained by the optimization with the memory-efficient adjoint method with $r = 20$). The full-rank loss indicates that the algorithm does not only improve the loss for the low-rank approximation, but also provides a good approximation of the loss for the underlying problem.

An interesting pattern can be observed from the time-reversal error for parameters \textbf{B} and \textbf{C}: The smaller the loss, the smaller the time-reversal error. This indicates that in the vicinity of a minimum, the forward solution $f$ is low-rank, and we compute the exact solution with the projector-splitting integrator due to the exactness property. Consequently, this results in a small Lyapunov exponent and the time-reversal error is small. This makes sense since our goal is to stay as close as possible to $f_0$, which is rank $1$.

\begin{figure}[!htb]
    \centering
    \includegraphics{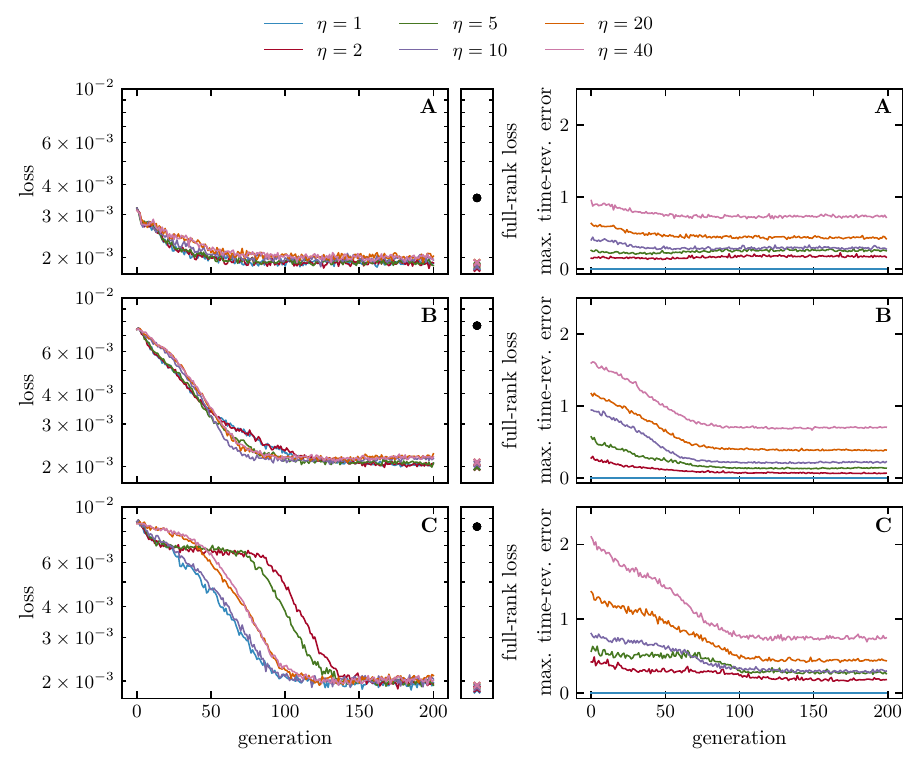}
    \caption{Loss (left) and maximal time-reversal error (right) over generations for the memory-efficient adjoint method with rank $r = 20$ and for different values of $\eta$ for the beam shaping example. In the small subpanel, the full-rank loss for the initial (black dots) and the final generation is shown. The optimization was done with Adam.}
    \label{fig:e2_1_loss_reversal_error}
\end{figure}

\begin{figure}[!htb]
    \centering
    \includegraphics{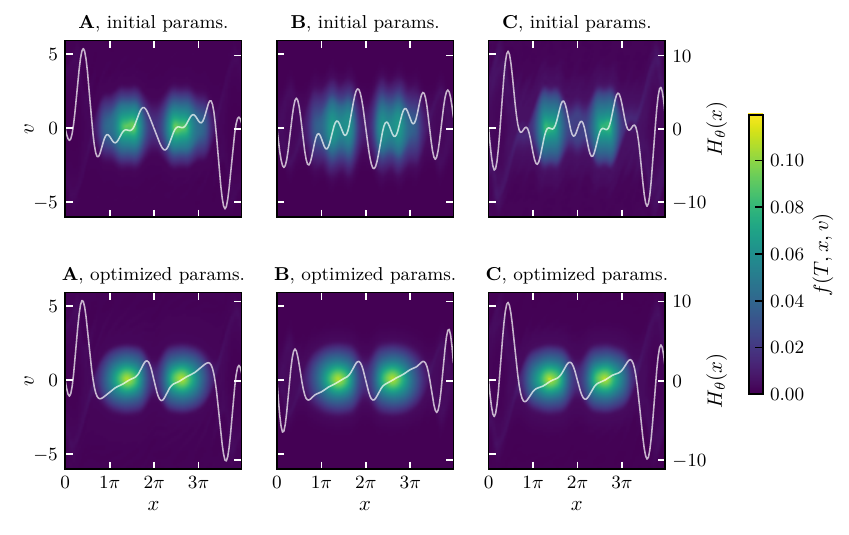}
    \caption{Distribution function $f(T, x, v)$ and external electric field $H_{\theta}(x)$ for the three initial parameter sets of \cref{tab:initial_parameters} (top row) and for the optimized parameter sets (bottom row) for the beam shaping example. The gradient, computed with the memory-efficient adjoint method with $r = 20$ and $\eta = 2$, served as input for the optimization with Adam. In order to evaluate the quality of the optimization, we used the optimized parameters and recomputed all the distribution functions for this figure with full rank.}
    \label{fig:e2_2_shape_comparison_LR_full_rank}
\end{figure}

Finally, we show in \cref{fig:e2_2_shape_comparison_LR_full_rank} the beam profiles at the final time and the external electric fields for the initial parameters (top row) and the optimized parameters obtained from the optimization with Adam and the memory-efficient adjoint method (bottom row). All beam profiles were computed with full rank. This demonstrates again that the optimization based on DLRA manages to find parameters that yield also an improved full-rank loss. We see that the optimized external electric fields maintain a better separation and focusing of the two beams. Furthermore, the optimized external electric fields are flattened and less oscillatory than the initial fields.

\section{Conclusion}\label{sec:conclusion}
In this work, we presented a memory-efficient adjoint method for the PDE-constrained optimization. This method is based on two ingredients: The first one is the dynamical low-rank approximation, which reduces the dimensionality of the solution of the PDE. The second ingredient is the time-reversibility of the projector-splitting integrator, which circumvents the need of storing the entire forward problem. We did show that the time-reversibility for the projector-splitting integrator can be violated in the rank-deficient case and observed large Lyapunov exponents and consequently, chaotic behavior of the DLRA solutions. To remedy this, we devised a checkpointing strategy that controls the time-reversal error. Finally, we showed the effectiveness of our method for two examples from kinetic plasma physics, where we have observed multiple orders of magnitude improvement in terms of memory cost.

In future research, our method could be improved in several directions. For example, one could adaptively select the times at which checkpoints are stored based on the estimate of the Lyapunov exponent given by \cref{eq:lyapunov-estimate}. This would allow us to reduce the number of checkpoints even further, especially in the linear regime. We have also seen from \cref{tab:memory-requirements} that for the memory-efficient adjoint method the quantities that are used to reconstruct the nonlinear terms of the PDE during the backward pass (i.e. the lower-dimensional electric field) can become a storage bottleneck. Although we expect this to be less relevant for high-dimensional problems, it would be interesting to investigate how to further compress these quantities (e.g.~using a low-rank tensor format that also acts in time).

It is further an interesting question whether the chaotic behavior observed for the robust DLRA integrators is an intrinsic feature of such methods or if different types of methods could be developed that overcome this limitation. If this can be done it would reduce the need for checkpointing, thus further reducing memory cost, and most likely result in smoother optimization landscapes. Finally, it would be interesting to theoretically investigate why gradient-based optimization works with our memory-efficient adjoint method even if the time-reversal error is very high, and to derive theoretical guarantees for the direction of the gradient depending on the rank.

\section{Data availability}\label{sec:data-availability}
The code for reproducing the numerical experiments and plots is available at \url{https://codeberg.org/jmangott/vp-1d-adjoint-method}.

\section{Acknowledgements}
L.E. would like to acknowledge helpful discussions with Jonas Kusch (Norwegian University of Life Sciences) on low-rank integrators and their reversibility. J.M. would like to thank Christian Bargetz (Universität Innsbruck) for helpful discussions on Fréchet derivatives. This research was funded by the Austrian Science Fund (FWF), \url{https://doi.org/10.55776/PAT2937525}.

\printbibliography[heading=bibintoc]

\end{document}